\theoremstyle{plain}
\newtheorem{thm}{Theorem}[section]
\newtheorem{lem}[thm]{Lemma}
\newtheorem{cor}[thm]{Corollary}
\newtheorem{remark}[thm]{Remark}
\theoremstyle{definition}
\newtheorem{defn}[thm]{Definition}
\newtheorem{notation}[thm]{Notation}
\newtheorem{exam}[thm]{Example}
\font\bigmath=cmsy10 scaled \magstep 2
\newcommand{\bigtimes}{\hbox{\bigmath \char'2}}
\begin{document}

\title{A Generalized Central Sets Theorem In Partial Semigroups}

\author{Arpita Ghosh}

\email{arpi.arpi16@gmail.com}

\email{arpitamath18@klyuniv.ac.in}

\address{Department of Mathematics, University of Kalyani,
Kalyani, Nadia-741235
West Bengal, India}

\keywords{Central sets theorem, Partial semigroups, Algebraic structure of Stone-\v{C}ech
compactification}
\begin{abstract}
The most powerful formulation of the Central Sets Theorem in an arbitrary semigroup was proved
in the work of De, Hindman, and Strauss. The sets which satisfy the
conclusion of the above Central Sets Theorem are called $C$-sets. The original
Central Sets Theorem was extended by J. McLeod for adequate commutative
partial semigroups. In this work, we will extend the Central Sets Theorem
obtained by taking all possible {\it adequate\/} sequences in a commutative adequate partial semigroup. 
We shall also discuss a sufficient conditions for being a set $C$-set in our context.
\end{abstract}

\maketitle

\section{introduction}

The notion of the central subset of $\mathbb{N}$ was originally introduced
by Furstenberg \cite{F81} in terms of a topological dynamical system. 
Before defining
central sets let us  start with original Central Sets Theorem
due to Furstenberg \cite{F81}.

\begin{thm}[The Original Central Sets Theorem]
 Let $l\in\mathbb{N}$ and for each $i\in\{1,2,\ldots,l\}$, and
let $\langle y_{i,n}\rangle_{n=1}^{\infty}$ be a sequence
in $\mathbb{Z}$. Let $C$ be a central subset of $\mathbb{N}$. Then
there exist sequences $\langle a_{n}\rangle_{n=1}^{\infty}$
in $\mathbb{N}$ and $\langle H_{n}\rangle_{n=1}^{\infty}$
in $\mathcal{P}_{f}(\mathbb{N})$ such that 
\begin{itemize}
\item[(1)] for all $n$, $\max H_{n} < \min H_{n+1}$ and
\item[(2)] for all $F\in\mathcal{P}_{f}(\mathbb{N})$ and all 
$i\in\{1,2,\ldots,l\},\sum_{n\in F}(a_{n}+\sum_{t\in H_{n}}y_{i,t})\in C$.
\end{itemize}
\end{thm}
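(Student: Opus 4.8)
The plan is to give the now-standard algebraic proof through the smallest ideal of $(\beta\mathbb{N},+)$, working inside $(\mathbb{Z},+)$ so that the translates $-x+A$ make sense. First I would invoke the algebraic characterization of central sets: since $C$ is central, fix a minimal idempotent $p$ in $\beta\mathbb{N}$ with $C\in p$. The engine of the whole argument is the elementary but essential lemma that for any $A\in p$ the set $A^{\star}=\{x\in A:-x+A\in p\}$ again lies in $p$, and moreover $-x+A^{\star}\in p$ for every $x\in A^{\star}$; this is exactly what will allow a partial sum to be extended while everything stays inside a member of $p$. Writing $C^{\star}$ for this set, note that $C^{\star}\subseteq C$, so it suffices to force every finite sum into $C^{\star}$.

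Next I would build $\langle a_{n}\rangle$ and $\langle H_{n}\rangle$ by induction on $m$, maintaining the invariant that $\max H_{n}<\min H_{n+1}$ and that for every nonempty $F\subseteq\{1,\dots,m\}$ and every $i\le l$ the sum $\sum_{n\in F}\bigl(a_{n}+\sum_{t\in H_{n}}y_{i,t}\bigr)$ lies in $C^{\star}$. The base case and the inductive step are the same computation. Let $x_{F,i}$ denote the finitely many sums already built; each $x_{F,i}\in C^{\star}$, so by the lemma the set $B_{i}=C^{\star}\cap\bigcap_{F}(-x_{F,i}+C^{\star})$ is a member of $p$ for every $i$. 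Splitting a sum over $F\cup\{m+1\}$ into the old part $x_{F,i}$ plus the new increment shows that it returns to $C^{\star}$ precisely when $a_{m+1}+\sum_{t\in H_{m+1}}y_{i,t}\in B_{i}$. Thus the step reduces to finding a single $a_{m+1}\in\mathbb{N}$ and a single block $H_{m+1}$ with $\min H_{m+1}>\max H_{m}$ such that, simultaneously for all $i\le l$, the $i$-th shifted increment falls into $B_{i}$; positivity of $a_{m+1}$ is then automatic on taking it large relative to the finitely many integers $\sum_{t\in H_{m+1}}y_{i,t}$.

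The hard part is exactly this simultaneous choice: one pair $(a_{m+1},H_{m+1})$ must drive $l$ differently twisted sums into $l$ different members of $p$. A naive singleton $H_{m+1}=\{t\}$ fails, since a central set need not contain a translate of the configuration $\{y_{i,t}\}_{i\le l}$, so the freedom in the size and the elements of the block is indispensable. To package the $l$ conditions I would pass to $(\mathbb{Z}^{l},+)$: the diagonal embedding $d(a)=(a,\dots,a)$ extends to a continuous homomorphism of Stone-\v{C}ech compactifications, so $q=\widetilde d(p)$ is a minimal idempotent of $\beta(\mathbb{N}^{l})$, and $\prod_{i}B_{i}\in q$ because $d^{-1}\bigl(\prod_{i}B_{i}\bigr)=\bigcap_{i}B_{i}\in p$. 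The requirement then reads $d(a_{m+1})+\sum_{t\in H_{m+1}}(y_{1,t},\dots,y_{l,t})\in\prod_{i}B_{i}$, a single membership along the diagonal, with the diagonal constraint on $a_{m+1}$ being exactly what couples the $l$ coordinates. Producing such a block from the prescribed vector sequence while only a member of $q$ is under control is where the genuine Ramsey-theoretic content lies, and this is the step I expect to require the most care; it is also the step whose strengthening to arbitrary adequate sequences is the point of the generalization to follow. Once the induction runs, specializing the invariant to singletons $F=\{n\}$ over all $i$ yields conclusion (2), while the block ordering gives conclusion (1).
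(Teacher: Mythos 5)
The paper itself does not prove this theorem; it simply cites Furstenberg's \cite[Proposition 8.21]{F81}, so there is no internal argument to compare against. Judged on its own terms, your outline follows the standard algebraic route (minimal idempotent $p$ with $C\in p$, the $C^{\star}$ lemma, induction over finite sets $F$), which is indeed the template the paper uses later for its own results. But there is a genuine gap: the entire difficulty of the theorem is concentrated in the step you defer, namely producing, for a given $B_{1},\dots,B_{l}\in p$ and a bound $r$, a single $a\in\mathbb{N}$ and a single block $H$ with $\min H>r$ such that $a+\sum_{t\in H}y_{i,t}\in B_{i}$ for every $i$ simultaneously. You correctly identify this as the hard part and then do not prove it, so the induction does not close. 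This missing step is exactly the analogue of Lemma~\ref{conv} in the present paper (and of \cite[Lemma 2.1]{DHS08}): its proof requires constructing, inside the product $\bigtimes_{i=1}^{l}\beta\mathbb{Z}$, the nested sets $I_{(W,n)}\subseteq E_{(W,n)}$ of ``increment vectors,'' showing that $E=\bigcap cl\,E_{(W,n)}$ is a compact subsemigroup with $I=\bigcap cl\,I_{(W,n)}$ an ideal of it, and then using $K(\bigtimes_{i=1}^{l}\beta\mathbb{Z})=\bigtimes_{i=1}^{l}K(\beta\mathbb{Z})$ together with \cite[Theorem 1.65]{HS98} to place $(p,\dots,p)$ in $K(E)\subseteq I$. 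None of that appears in your sketch.

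Moreover, the one concrete device you do propose for this step is not sound as stated: the diagonal map $d(a)=(a,\dots,a)$ extends to a continuous homomorphism $\widetilde d:\beta\mathbb{N}\to\beta(\mathbb{N}^{l})$, and $\widetilde d(p)$ is an idempotent, but it need not be a \emph{minimal} idempotent of $\beta(\mathbb{N}^{l})$, since $\widetilde d$ is not surjective and continuous homomorphisms only carry $K$ onto $K$ of the image subsemigroup. The correct setting is the finite product $(\beta\mathbb{Z})^{l}$ (not $\beta(\mathbb{Z}^{l})$), where \cite[Theorem 2.23]{HS98} gives $K\bigl((\beta\mathbb{Z})^{l}\bigr)=\bigl(K(\beta\mathbb{Z})\bigr)^{l}$, so that $(p,\dots,p)$ genuinely lies in the smallest ideal; and even then minimality of $(p,\dots,p)$ alone does not hand you the increment $\sum_{t\in H}(y_{1,t},\dots,y_{l,t})$ --- you still need the $E$/$I$ construction to force the second coordinate of the increment to come from a block of the prescribed sequences. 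A smaller point: since the $y_{i,n}$ live in $\mathbb{Z}$, the argument must be run in $\beta\mathbb{Z}$ with the observation that $p\in\beta\mathbb{N}$ remains minimal there ($K(\beta\mathbb{N})=\beta\mathbb{N}\cap K(\beta\mathbb{Z})$); you gesture at positivity of $a_{m+1}$ but this ambient-semigroup issue should be handled explicitly.
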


\begin{proof}
\cite[Proposition 8.21]{F81}
\end{proof}

It was shown in \cite{BH90} that this definition was equivalent to 
a simpler algebraic characterization which we use
below. This algebraic characterization is in the setting of the Stone-\v{C}ech
compactification, $\beta S$, of a discrete semigroup $S$. 
We shall present a brief introduction to this structure
later in this section.

Observe that the conclusion of the Central Sets Theorem 
shows that central sets posses rich combinatorial structures. To present the notion central
sets we present a brief introduction of the algebraic structure of $\beta S$ for a
discrete semigroup $(S,+)$. We take the points of $\beta S$ to be
the ultrafilters on $S$, identifying the principal ultrafilters with
the points of $S$ and thus pretending that $S\subseteq\beta S$.
Given $A\subseteq S$ let us set, $\overline{A}=\{p\in\beta S\mid A\in p\}$.
 Then the set $\{\overline{A}\mid A\subseteq S\}$ is a basis for a topology
on $\beta S$. The operation $+$ on $S$ can be extended to the Stone-\v{C}ech
compactification $\beta S$ of $S$ so that $(\beta S,+)$ is a compact
right topological semigroup (meaning that for any $p\in\beta S$,
the function $\rho_{p}:\beta S\rightarrow\beta S$ defined by $\rho_{p}(q)=q+p$
is continuous) with $S$ contained in its topological center (meaning
that for any $x\in S$, the function $\lambda_{x}:\beta S\rightarrow\beta S$
defined by $\lambda_{x}(q)=x+q$ is continuous). Given $p,q\in\beta S$
and $A\subseteq S$, $A\in p+q$ if and only if $\{x\in S \mid -x+A\in q\}\in p$,
where $-x+A=\{y\in S \mid x+y\in A\}$.

A nonempty subset $I$ of a semigroup $(T,+)$ is called a left ideal
of ${T}$ if $T+I\subset I$, a right ideal if $I+T\subset I$,
and a two-sided ideal (or simply an ideal) if it is both a left and
a right ideal. A minimal left ideal is a left ideal that does not
contain any proper left ideal. Similarly, we can define a minimal right
ideal.

Any compact Hausdorff right topological semigroup $(T,+)$ has a smallest
two sided ideal

\[
\begin{array}{rl}
K(T) = & \bigcup\{L \mid L\text{ is a minimal left ideal of }T\}\\
  = &\bigcup\{R \mid R\text{ is a minimal right ideal of }T\}
\end{array}
\]

Given a minimal left ideal $L$ and a minimal right ideal $R$, $L\cap R$
is a group, and in particular, contains an idempotent. An idempotent
in $K(T)$ is called a minimal idempotent. If $p$ and $q$ are idempotents
in $T$, we write $p\leq q$ if and only if $p+q=q+p=p$. An idempotent
is minimal with respect to this relation if and only if it is a member
of the smallest ideal. See \cite{HS98}  for an elementary introduction
to the algebra of $\beta S$ and for any unfamiliar
details.

\begin{defn}
Let $S$ be a discrete semigroup and let $C$ be a subset of $S$.
Then $C$ is {\it central\/} if and only if there is some minimal idempotent $p \in \beta S$ with $p \in \overline{C}$.
\end{defn}

The original Central Sets Theorem was already strong enough to derive several combinatorial 
consequences such as Rado's theorem. Later this subject has been 
extensively studied by many authors.
In \cite{HS98} the Central Sets Theorem was extended to arbitrary semigroups. The version for 
commutative semigroups extended the original Central Sets Theorem by allowing the choice of countably many sequences
at a time.

\begin{thm}
Let $(S,\cdot)$ be a commutative semigroup. Let $C$ be a central subset
of $S$ and for each $l\in\mathbb{N}$, let $\left\langle y_{l,n}\right\rangle _{n=1}^{\infty}$
be a sequence in $S$. Then there exist sequences $\left\langle a_{n}\right\rangle _{n=1}^{\infty}$
in $S$ and $\left\langle H_{n}\right\rangle _{n=1}^{\infty}$ in
$\mathcal{P}_{f}(\mathbb{N})$ such that $ \max H_{n} < \min H_{n+1}$ for
each $n\in\mathbb{N}$ and such that for each $f\in\Phi$, the set
of all functions $f:\mathbb{N}\rightarrow\mathbb{N}$ for which $f(n)\leq n$
for all $n\in\mathbb{N}$, $FP({\left\langle a_{n}.\Pi_{t\in H_{n}}y_{f(n),t}\right\rangle }_{n=1}^{\infty})\break
\subseteq C$.
\end{thm}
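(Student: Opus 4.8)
The plan is to run the whole argument through the algebraic characterization of centrality. Since $C$ is central, I would first fix a minimal idempotent $p\in\beta S$ with $C\in p$, and set $C^{\star}=\{x\in C: x^{-1}C\in p\}$, where $x^{-1}C=\{y\in S: x\cdot y\in C\}$. The standard idempotent lemma then gives $C^{\star}\in p$ together with the self-replicating property that $x^{-1}C^{\star}\in p$ for every $x\in C^{\star}$; this replication is the engine of the construction. I would then prove, by induction on $n$, that one can choose $a_{1},\dots,a_{n}\in S$ and $H_{1},\dots,H_{n}\in\mathcal P_{f}(\mathbb N)$ with $\max H_{i}<\min H_{i+1}$ and satisfying the reinforced hypothesis that $\prod_{i\in F}\bigl(a_{i}\cdot\prod_{t\in H_{i}}y_{f(i),t}\bigr)\in C^{\star}$ for every nonempty $F\subseteq\{1,\dots,n\}$ and every $f$ with $f(i)\le i$. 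Since $C^{\star}\subseteq C$, passing to the limit yields exactly $FP(\langle a_{n}\cdot\prod_{t\in H_{n}}y_{f(n),t}\rangle_{n=1}^{\infty})\subseteq C$ for each $f\in\Phi$, which is the conclusion.

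For the inductive step I would freeze the finitely many products $w$ already produced at stage $n$; each lies in $C^{\star}$, so $w^{-1}C^{\star}\in p$, whence $B:=C^{\star}\cap\bigcap_{w}w^{-1}C^{\star}\in p$. A direct check, using commutativity and the defining property of $C^{\star}$, shows that the hypothesis propagates to stage $n+1$ as soon as one can produce a \emph{single} element $a_{n+1}$ and a \emph{single} block $H_{n+1}$ with $\min H_{n+1}>\max H_{n}$ such that $a_{n+1}\cdot\prod_{t\in H_{n+1}}y_{j,t}\in B$ holds \emph{simultaneously} for every $j\in\{1,\dots,n+1\}$: a product over $F\ni n+1$ with $F\neq\{n+1\}$ then factors as $w\cdot\bigl(a_{n+1}\prod_{t\in H_{n+1}}y_{f(n+1),t}\bigr)$ with $w\in C^{\star}$ and the second factor in $B\subseteq w^{-1}C^{\star}$, so the product returns to $C^{\star}$.

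The hard part will be precisely this simultaneity: the same $a_{n+1}$ and the same $H_{n+1}$ must serve all $n+1$ of the sequences at once, while the $y_{j,t}$ are arbitrary and need not enter any $p$-large set, so the $y$-products cannot be steered into $B$ directly. The route I would take is to pass to the product semigroup $T=\bigtimes_{j=1}^{n+1}S$ with coordinatewise multiplication (again commutative), to form the vectors $\vec y_{t}=(y_{1,t},\dots,y_{n+1,t})$, and to embed $S$ along the diagonal $d(a)=(a,\dots,a)$. The delicate point, which I regard as the genuine obstacle, is to select in $\beta T$ a minimal idempotent $q$ that is carried on $\overline{d(S)}$ and projects to $p$, so that $C\times\cdots\times C\in q$ while every $q$-large set still meets the diagonal. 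Granting such a $q$, the one-sequence Central Sets configuration applied to $\langle\vec y_{t}\rangle$ relative to $q$, together with the finite-products (Galvin--Glazer) lemma, yields a block $H_{n+1}$ with arbitrarily large minimum and a \emph{diagonal} correcting factor $d(a_{n+1})$ placing every coordinate of $d(a_{n+1})\cdot\prod_{t\in H_{n+1}}\vec y_{t}$ into $B$; reading this off coordinatewise gives the required $a_{n+1}$ and $H_{n+1}$, and the induction closes.
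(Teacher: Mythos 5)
Your outer induction is exactly right and matches the standard architecture (and the one this paper uses for its partial-semigroup analogues): fix a minimal idempotent $p$ with $C\in p$, pass to $C^{\star}$, and at stage $n+1$ intersect $C^{\star}$ with the finitely many sets $w^{-1}C^{\star}$ to get $B\in p$; you correctly reduce everything to producing one $a_{n+1}$ and one block $H_{n+1}$ with $a_{n+1}\cdot\prod_{t\in H_{n+1}}y_{j,t}\in B$ for all $j\le n+1$ simultaneously. The gap is in your proposed proof of that simultaneity step. First, you propose to find a minimal idempotent $q$ of $\beta T$, $T=S^{n+1}$, lying in $\overline{d(S)}$. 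Such a $q$ need not exist: $d(S)\in q$ for some $q\in K(\beta T)$ would force $d(S)$ to be piecewise syndetic in $T$, and already for $S=(\mathbb{N},+)$ and $T=\mathbb{N}^{2}$ the diagonal is not piecewise syndetic (finitely many translates of shifted diagonals cannot cover large grids, which realize too many values of $a-b$). The object that does work is $(p,\dots,p)$ in the \emph{product} $\bigtimes_{j=1}^{n+1}\beta S$ (not $\beta(S^{n+1})$, which is strictly larger), where $K(\bigtimes_{j}\beta S)=\bigtimes_{j}K(\beta S)$ makes it minimal and it does lie in the closure of the diagonal.

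Second, even after that repair, "the one-sequence Central Sets configuration relative to $q$ plus Galvin--Glazer" does not yield a \emph{diagonal} correcting factor: nothing in the hypothesis "$q$ lives on $\overline{d(S)}$" forces the element $a$ produced by such an application to be a constant tuple. The diagonality has to be built into the sets one takes closures of. The actual mechanism (the semigroup analogue of Lemma \ref{conv}) is: let $I_{n}$ be the set of tuples $\bigl(a\cdot\prod_{t\in L}y_{1,t},\dots,a\cdot\prod_{t\in L}y_{m,t}\bigr)$ with $a\in S$ and $\min L>n$, let $E_{n}=I_{n}\cup\{(a,\dots,a):a\in S\}$, and set $E=\bigcap_{n}cl\,E_{n}$, $I=\bigcap_{n}cl\,I_{n}$ inside $\bigtimes_{j}\beta S$. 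One checks directly that $E$ is a compact subsemigroup with $I$ a two-sided ideal, that $(p,\dots,p)\in E$ (witnessed by diagonal points of $C$), and that $(p,\dots,p)\in K(\bigtimes_{j}\beta S)\cap E\subseteq K(E)\subseteq I$; then the neighbourhood $\bigtimes_{j}\overline{B}$ of $(p,\dots,p)$ meets $I_{n}$, and an element of that intersection is by definition a diagonal $a$ together with a block $L$, $\min L>n$, doing all $m$ jobs at once. Without this subsemigroup/ideal construction, the step you flag as "the genuine obstacle" remains unproved.
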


Here in the above theorem, $FP$ stands for the finite products. Precisely, for a sequence 
$\langle x_n\rangle _{n= 1}^{\infty}$ in $S$, $FP(\langle x_n \rangle_{n= 1}^{\infty})=
\{\prod\limits_{n \in F}x_n \mid F \in \mathcal{P}_f(\mathbb{N})\}$.

Later, in \cite{DHS08}, the authors extended the Central Sets Theorem considering
all sequences at one time.

\begin{thm}
Let $(S,+)$ be a commutative semigroup and let $\mathcal{T}=S^{\mathbb{N}}$, 
the set of sequences in $S$. Let $C$ be a central subset of $S$.
There exist functions $\alpha:\mathcal{P}_{f}(\mathcal{T})\rightarrow S$
and $H:\mathcal{P}_{f}(\mathcal{T})\rightarrow\mathcal{P}_{f}(\mathbb{N})$
such that

\noindent (1) if $F,G\in\mathcal{P}_{f}(\mathcal{T})$ and $F\subsetneq G$,
then $\max H(F) < \min H(G)$ and

\noindent (2) whenever $m\in\mathbb{N},G_{1},G_{2},\ldots,G_{m}\in\mathcal{P}_{f}(\mathcal{T}),G_{1}\subsetneq G_{2}\subsetneq\ldots\subsetneq G_{m}$
and for each $i\in\{1,2,\ldots,m\}$ , $f_{i}\in G_{i}$, one has
$\sum_{i=1}^{m}(\alpha(G_{i})+\sum_{t\in H(G_{i})}f_i(t))\in C$.
\end{thm}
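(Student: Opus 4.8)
The plan is to prove the statement via the algebraic characterization of central sets in $\beta S$, following the ultrafilter-dynamics approach of De, Hindman, and Strauss. Since $C$ is central, fix a minimal idempotent $p\in\beta S$ with $C\in p$. I would work inside the semigroup $\beta S$ and exploit the fact that for an idempotent $p$, the set $C^{\star}=\{x\in C\mid -x+C\in p\}$ belongs to $p$ and has the self-correcting property that for every $x\in C^{\star}$ one has $-x+C^{\star}\in p$ (this is the standard lemma for idempotents, available from the structure described in the excerpt). The functions $\alpha$ and $H$ will be constructed by induction on $|F|$, so that all the partial sums produced along the way land in $C^{\star}$, guaranteeing that one may always continue the construction.

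First I would set up the induction on $n=|F|$ for $F\in\mathcal{P}_f(\mathcal{T})$, defining $\alpha(F)$ and $H(F)$ simultaneously. For the base case $|F|=1$, I would use that $C^{\star}\in p$ together with minimality of $p$ to find, for the finitely many sequences $f\in F$, an element $\alpha(F)$ and a finite set $H(F)\subseteq\mathbb{N}$ such that $\alpha(F)+\sum_{t\in H(F)}f(t)\in C^{\star}$ for every $f\in F$ simultaneously; this is where the commutativity of $S$ and the idempotent property of $p$ combine, via the fact that $p+p=p$, to let one absorb a common increment. For the inductive step, assuming $\alpha$ and $H$ have been defined on all proper initial pieces, I would consider every admissible chain $G_1\subsetneq\cdots\subsetneq G_{m-1}\subsetneq F$ with selections $f_i\in G_i$; by the inductive hypothesis each partial sum $\sum_{i=1}^{m-1}(\alpha(G_i)+\sum_{t\in H(G_i)}f_i(t))$ lies in $C^{\star}$, hence its corresponding shifted set lies in $p$. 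Intersecting these finitely many sets in $p$ with $C^{\star}$, and again using minimality of $p$ together with $p+p=p$, I would extract a single $\alpha(F)$ and a finite $H(F)$ with $\min H(F)>\max H(G)$ for every previously treated $G\subsetneq F$, such that adjoining the new block $\alpha(F)+\sum_{t\in H(F)}f(t)$ to each such partial sum keeps the total in $C^{\star}$ for all choices $f\in F$ at once.

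The condition (1) that $F\subsetneq G\Rightarrow\max H(F)<\min H(G)$ is arranged directly by always choosing $H(F)$ with indices strictly larger than all previously used indices, which is possible because at each stage only finitely many values of $H$ have been committed and $S^{\mathbb{N}}$-sequences have infinitely many available coordinates. Condition (2) is then precisely the statement that every admissible chain-sum lies in $C^{\star}\subseteq C$, which is exactly what the inductive construction maintains.

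The main obstacle I anticipate is the bookkeeping in the inductive step: one must simultaneously control all chains $G_1\subsetneq\cdots\subsetneq G_m$ ending in $F$ and all coordinate selections $f_i\in G_i$, so the number of membership conditions to be satisfied in $p$ grows, and one must verify they can all be met by a single pair $(\alpha(F),H(F))$. The key technical device that resolves this is that a finite intersection of members of an ultrafilter is again a member, combined repeatedly with the idempotent identity $\{x:-x+A\in p\}\in p$ for $A\in p$; the delicate point is to phrase the induction so that the hypothesis yields, for each relevant partial sum $s$, the membership $-s+C^{\star}\in p$, and then to choose the new increment inside the intersection of all these sets together with $C^{\star}$ itself, picking $H(F)$ to respect the ordering constraint. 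Verifying that this single choice works uniformly across the chain structure, rather than for one chain at a time, is the heart of the argument.
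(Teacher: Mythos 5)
Your inductive bookkeeping is essentially the right skeleton (and matches the structure the paper uses for its partial-semigroup analogue, Theorem \ref{adeprod}): fix a minimal idempotent $p$ with $C\in p$, set $C^{\star}=\{x\in C\mid -x+C\in p\}$, induct on $|F|$, collect the finitely many partial sums $M$ arising from chains ending strictly below $F$, and choose the new block inside $C^{\star}\cap\bigcap_{y\in M}(-y+C^{\star})$, with $\min H(F)$ pushed above all previously used indices. All of that is correct and is exactly the ``easy half'' of the argument.

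The genuine gap is the step you describe as ``using minimality of $p$ together with $p+p=p$ to extract a single $\alpha(F)$ and a finite $H(F)$.'' What you actually need, at both the base case and the inductive step, is the following lemma: if $p$ is a \emph{minimal} idempotent, $B\in p$, $f_1,\dots,f_m$ are sequences in $S$, and $r\in\mathbb{N}$, then there exist $a\in S$ and $L\in\mathcal{P}_f(\mathbb{N})$ with $\min L>r$ such that $a+\sum_{t\in L}f_i(t)\in B$ for \emph{every} $i$ simultaneously. This is not a consequence of the idempotent identity $\{x\mid -x+A\in p\}\in p$ combined with finite intersections of ultrafilter members; those tools only let you iterate translates of sets already known to be in $p$, and they give you no way to produce an element of $B$ of the special form $a+\sum_{t\in L}f_i(t)$ for prescribed sequences $f_i$. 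The standard proof of this lemma (the analogue of Lemma \ref{conv} in the paper, or \cite[Theorem 2.2]{DHS08}) works in the product $(\beta S)^m$: one shows that the closure $E$ of the set of tuples $\bigl(a+\sum_{t\in L}f_1(t),\dots,a+\sum_{t\in L}f_m(t)\bigr)$ together with the diagonal $\{(a,\dots,a)\}$ is a compact subsemigroup, that the closure $I$ of the non-diagonal part is an ideal of $E$, and that $\overline{p}=(p,\dots,p)$ lies in $E\cap K\bigl((\beta S)^m\bigr)=E\cap\bigl(\bigtimes_{i=1}^m K(\beta S)\bigr)$, hence in $K(E)\subseteq I$; then every basic neighbourhood $\bigtimes_{i=1}^m\overline{B}$ of $\overline{p}$ meets the approximating sets, producing the desired $a$ and $L$. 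Your proposal never supplies this mechanism, and the phrase ``absorb a common increment via $p+p=p$'' suggests the idempotent property alone is doing the work, which it cannot: idempotency gives IP-structure inside $B$, but it is membership of $p$ in the smallest ideal, exploited through the product-space ideal argument, that forces sums built from \emph{arbitrary prescribed sequences} into $B$. Without this lemma the induction cannot start, let alone continue.
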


\begin{proof}
\noindent \cite[Theorem 2.2]{DHS08}.
\end{proof}

The authors in \cite{HS09}, introduced the notions of $C$-sets
which satisfying the conclusion of the above Central Sets Theorem.

\begin{defn}
Let $(S,+)$ be a commutative semigroup and let $A\subseteq S$ and
$\mathcal{T}=S^{\mathbb{N}}$, the set of sequences in $S$. The
set $A$ is a $C$-set if and only if there exist functions $\alpha:\mathcal{P}_{f}(\mathcal{T})\rightarrow S$
and $H:\mathcal{P}_{f}(\mathcal{T})\rightarrow\mathcal{P}_{f}(\mathbb{N})$
such that

\noindent (1) if $F,G\in\mathcal{P}_{f}(\mathcal{T})$ and $F\subsetneq G$,
then $ \max H(F) < \min H(G)$ and

\noindent (2) whenever $m\in\mathbb{N},G_{1},G_{2},\ldots,G_{m}\in\mathcal{P}_{f}(\mathcal{T}),G_{1}\subsetneq G_{2}\subsetneq\ldots\subsetneq G_{m}$
and for each $i\in\{1,2,\ldots,m\}$ , $f_{i}\in G_{i}$, one has
$\sum_{i=1}^{m}(\alpha(G_{i})+\sum_{t\in H(G_{i})}f_i(t))\in A$.
\end{defn}

The notion of partial semigroups is another important notion in the
study of Ramsey theory. In \cite{M05}, the author extended the Central
Sets Theorem for commutative adequate partial semigroup with a finite
number of adequate sequences. Before stating the theorem we
need to introduce some definitions.

\begin{defn}
(Partial semigroup) A partial semigroup is  defined as a pair $(G, \ast)$ where $\ast$ is an operation 
defined on a subset $X$ of $G \times G$ and satisfies the statement that for all 
$x$, $y$, $ z$ in $G$, $(x \ast y) \ast z = x \ast (y \ast z) $ in the sense that if either side 
is defined, so is the other and they are equal. A partial semigroup is commutative if ${x\ast y=y\ast x}$
for every ${(x,y)\in X}$.
\end{defn}

\begin{exam}
 Every  semigroup is a partial semigroup.
\end{exam}

\begin{exam}
 Let us consider ${G=\mathcal{P}_f({\mathbb{N}})}= \{F \mid \emptyset \neq F \subseteq \mathbb{N}$ and $F$ is finite$\}$ and let ${X=\{(\alpha,\beta)\in G \times G \mid \alpha\cap\beta=\emptyset\}}$
be the family of all pairs of disjoint sets, and let ${\ast:X\rightarrow G}$
be the union. It is easy to check that this is a commutative partial
semigroup. We shall denote this partial semigroup as $(\mathcal{P}_f({\mathbb{N}}),\uplus)$.

\end{exam}
\begin{defn}
Let $(S,\ast)$ be a partial semigroup.\\
(a) For $s\in S$, $\phi(s)=\{t\in S \mid s\ast t$ is defined$\}$.\\
(b) For $H\in\mathcal{P}_{f}(S)$, $\sigma(H)=\bigcap_{s \in H}\phi(s)$.\\
(c) $(S,\ast)$ is adequate if and only if $\sigma(H)\neq\emptyset$
for all $H\in\mathcal{P}_{f}(S)$.\\
(d) $\delta S=\bigcap_{x\in S}cl_{\beta S}(\phi(x))=\bigcap_{H\in\mathcal{P}_{f}(S)}cl_{\beta S}(\sigma(H))$.
\end{defn}

So, the partial semigroup $(\mathcal{P}_f({\mathbb{N}}),\uplus)$ is adequate. We are specifically interested in 
adequate partial semigroups as they lead to an interesting subsemigroup $\delta S$ of $\beta S$,
the Stone-\v{C}ech compactification of $S$ which  is itself
a compact right topological semigroup. Notice that adequacy of $S$
is exactly what is required to guarantee that $\delta S\neq\emptyset$.
If $S$ is, in fact, a semigroup, then $\delta S=\beta S$.

Now we recall some of the basic properties of the operation $\ast$
in $\delta S$.

\begin{defn}
Let $(S,\ast)$ be a partial semigroup. For $s\in S$ and $A\subseteq S$,
$s^{-1}A=\{t\in\phi(s) \mid s\ast t\in A\}$.
\end{defn}

\begin{lem}
Let $(S,\ast)$ be a partial semigroup, let $A\subseteq S$ and let
$a,b,c\in S$. Then $c\in b^{-1}(a^{-1}A)$ if and only if both $b\in\phi(a)$
and  $c\in(a\ast b)^{-1}A$.
In particular, if $b\in\phi(a)$, then $b^{-1}(a^{-1}A)=(a\ast b)^{-1}A$.
\end{lem}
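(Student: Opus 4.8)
The plan is to unwind both sides of the stated equivalence directly from the definition of the operation $s^{-1}A$ given just before the lemma. Recall that for $s\in S$ and $B\subseteq S$ we have $s^{-1}B=\{t\in\phi(s)\mid s\ast t\in B\}$. First I would apply this definition twice, once with $s=a$ and $B=A$ to describe $a^{-1}A$, and once with $s=b$ and $B=a^{-1}A$. This gives that $c\in b^{-1}(a^{-1}A)$ means precisely $c\in\phi(b)$ together with $b\ast c\in a^{-1}A$, and the latter in turn unfolds to $b\ast c\in\phi(a)$ and $a\ast(b\ast c)\in A$.

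Next I would translate these conditions into the two claimed conditions $b\in\phi(a)$ and $c\in(a\ast b)^{-1}A$. The key move is to invoke the associativity axiom of the partial semigroup, which says that if either of $(a\ast b)\ast c$ or $a\ast(b\ast c)$ is defined then so is the other and they are equal. From $c\in\phi(b)$ we know $b\ast c$ is defined, and from $b\ast c\in\phi(a)$ we know $a\ast(b\ast c)$ is defined; associativity then forces $(a\ast b)\ast c$ to be defined, which in particular requires $a\ast b$ to be defined, i.e.\ $b\in\phi(a)$, and also gives $c\in\phi(a\ast b)$. Combining $(a\ast b)\ast c = a\ast(b\ast c)\in A$ then yields exactly $c\in(a\ast b)^{-1}A$.

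For the converse direction I would start from $b\in\phi(a)$ and $c\in(a\ast b)^{-1}A$, so that $a\ast b$ is defined, $c\in\phi(a\ast b)$, and $(a\ast b)\ast c\in A$. Applying associativity in the reverse direction, the definedness of $(a\ast b)\ast c$ guarantees that $a\ast(b\ast c)$ is defined and equals it; in particular $b\ast c$ is defined so $c\in\phi(b)$, and $a\ast(b\ast c)\in A$ together with the definedness of $b\ast c$ at $a$ gives $b\ast c\in a^{-1}A$, hence $c\in b^{-1}(a^{-1}A)$. The final sentence of the lemma, that $b^{-1}(a^{-1}A)=(a\ast b)^{-1}A$ whenever $b\in\phi(a)$, is then immediate: under the standing hypothesis $b\in\phi(a)$ the displayed equivalence reduces to $c\in b^{-1}(a^{-1}A)\iff c\in(a\ast b)^{-1}A$, which is exactly the set equality claimed.

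The only real subtlety — and the step I would be most careful about — is the bookkeeping of \emph{definedness} rather than just the equality of products. In an ordinary semigroup this lemma is a triviality, but here one must track at each step which partial products are guaranteed to exist, and the entire force of the statement comes from the ``if either side is defined, so is the other'' clause of the associativity axiom. I would therefore make sure that in each direction the definedness hypotheses I feed into associativity are genuinely available before concluding definedness of the rearranged product, since a careless argument could silently assume a product exists when the partial operation does not in fact provide it.
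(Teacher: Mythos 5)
Your proof is correct: the paper itself only cites \cite[Lemma 2.3]{HM01} for this statement, and your argument — unwinding the definition of $s^{-1}A$ twice and then using the ``if either side is defined, so is the other'' clause of partial associativity in both directions — is precisely the standard argument given there. Your attention to tracking definedness at each step is exactly the right (and only nontrivial) point.
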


\begin{proof}
\cite[Lemma 2.3]{HM01}
\end{proof}
\begin{defn}
Let $(S,\ast)$ be an adequate partial semigroup.\\
(a) For $a\in S$ and $q\in\overline{\phi(a)}$, $a\ast q=\{A\subseteq S \mid a^{-1}A\in q\}$.\\
(b) For $p\in\beta S$ and $q\in\delta S$, $p\ast q=\{A\subseteq S \mid \{a^{-1}A\in q\}\in p\}$.
\end{defn}

\begin{lem}
Let $(S,\ast)$ be an adequate partial semigroup.\\
(a) If $a\in S$ and $q\in\overline{\phi(a)}$, then $a\ast q\in\beta S$.\\
(b) If $p\in\beta S$ and $q\in\delta S$, then $p\ast q\in\beta S$.\\
(c) Let $p\in\beta S,q\in\delta S$, and $a\in S$. Then $\phi(a)\in p\ast q$
if and only if $\phi(a)\in p$.\\
(d) If $p,q\in\delta S$, then $p\ast q\in\delta S$.
\end{lem}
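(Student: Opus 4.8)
The plan is to treat all four clauses by direct verification of the ultrafilter axioms, leaning on a short list of algebraic identities for the sets $a^{-1}A$ and inserting the hypothesis $q\in\delta S$ (equivalently, $\phi(x)\in q$ for every $x\in S$) at exactly the points where it is needed. First I would record the elementary identities that drive everything, each immediate from the definition $a^{-1}A=\{t\in\phi(a)\mid a\ast t\in A\}$: namely $a^{-1}S=\phi(a)$, $a^{-1}\emptyset=\emptyset$, $a^{-1}(A\cap B)=a^{-1}A\cap a^{-1}B$, $a^{-1}(S\setminus A)=\phi(a)\setminus a^{-1}A$, and monotonicity $A\subseteq B\Rightarrow a^{-1}A\subseteq a^{-1}B$.

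For part (a), given $\phi(a)\in q$, I would check that $a\ast q=\{A\mid a^{-1}A\in q\}$ is an ultrafilter. We have $S\in a\ast q$ since $a^{-1}S=\phi(a)\in q$, and $\emptyset\notin a\ast q$ since $a^{-1}\emptyset=\emptyset\notin q$; upward closure and closure under intersection follow from monotonicity and $a^{-1}(A\cap B)=a^{-1}A\cap a^{-1}B$; and maximality follows from $a^{-1}(S\setminus A)=\phi(a)\setminus a^{-1}A$ together with $\phi(a)\in q$, which forces exactly one of $a^{-1}A$, $a^{-1}(S\setminus A)$ into $q$. For part (b), writing $A^{\sharp}=\{a\in S\mid a^{-1}A\in q\}$ so that $p\ast q=\{A\mid A^{\sharp}\in p\}$, I would establish the transfer identities $S^{\sharp}=S$, $\emptyset^{\sharp}=\emptyset$, $(A\cap B)^{\sharp}=A^{\sharp}\cap B^{\sharp}$, $(S\setminus A)^{\sharp}=S\setminus A^{\sharp}$, and monotonicity of $(\cdot)^{\sharp}$. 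Here the identities $S^{\sharp}=S$ and $(S\setminus A)^{\sharp}=S\setminus A^{\sharp}$ are precisely where $q\in\delta S$ enters, since one needs $\phi(a)\in q$ for every $a$. The ultrafilter axioms for $p\ast q$ then transfer directly from those of $p$.

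The main work is part (c), which is also the only place the partial-semigroup associativity is used essentially. I would prove $(\phi(a))^{\sharp}=\phi(a)$, i.e.\ that for every $b\in S$ one has $b^{-1}\phi(a)\in q$ iff $b\in\phi(a)$. Fix $b$; for $t\in\phi(b)$ the element $b\ast t$ is defined, and by the defining axiom $a\ast(b\ast t)$ is defined iff $(a\ast b)\ast t$ is defined. If $b\in\phi(a)$, then $a\ast b$ is defined and this equivalence yields $b^{-1}\phi(a)=\phi(b)\cap\phi(a\ast b)$, which lies in $q$ because $q\in\delta S$. If $b\notin\phi(a)$, then $a\ast b$ is undefined, so $(a\ast b)\ast t$ is never defined, whence $a\ast(b\ast t)$ is never defined, i.e.\ $b\ast t\notin\phi(a)$ for all $t\in\phi(b)$, giving $b^{-1}\phi(a)=\emptyset\notin q$. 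Thus $(\phi(a))^{\sharp}=\phi(a)$, and $\phi(a)\in p\ast q$ iff $\phi(a)\in p$.

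Finally, part (d) is immediate from the previous two: by (b), $p\ast q\in\beta S$, and since $p\in\delta S$ means $\phi(a)\in p$ for every $a\in S$, part (c) gives $\phi(a)\in p\ast q$ for every $a$, so $p\ast q\in\bigcap_{a\in S}\overline{\phi(a)}=\delta S$. I expect the case analysis in (c) — correctly invoking the ``if either side is defined, so is the other'' axiom to compute $b^{-1}\phi(a)$ — to be the only genuinely delicate point, with (a) and (b) being routine ultrafilter bookkeeping and (d) a one-line consequence.
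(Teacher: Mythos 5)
Your proposal is correct. The paper itself gives no proof of this lemma, only the citation to Hindman--McCutcheon \cite{HM01}, so there is nothing in the paper to compare against; your direct verification of the ultrafilter axioms is the standard argument, and you have placed the hypothesis $q\in\delta S$ exactly where it is needed (in $S^{\sharp}=S$, in $(S\setminus A)^{\sharp}=S\setminus A^{\sharp}$, and in showing $b^{-1}\phi(a)=\phi(a\ast b)\cap\phi(b)\in q$). The case analysis in (c), including the contrapositive reading of the ``if either side is defined, so is the other'' axiom when $a\ast b$ is undefined, is handled correctly, and (d) follows from (b) and (c) as you say.
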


\begin{proof}
\cite[Lemma 2.7]{HM01}.
\end{proof}
\begin{lem}\label{1.12}
Let $(S,\ast)$ be an adequate partial semigroup and let $q\in\delta S$.
Then the function $\rho_{q}:\beta S\rightarrow\beta S$ defined by
$\rho_{q}(p)=p\ast q$ is continuous.
\end{lem}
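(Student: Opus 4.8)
The plan is to verify continuity directly against the standard basis for the topology of $\beta S$. Recall that the collection $\{\overline{A}\mid A\subseteq S\}$ is a basis of clopen sets for $\beta S$, so to show that $\rho_{q}$ is continuous it suffices to prove that $\rho_{q}^{-1}(\overline{A})$ is open for every $A\subseteq S$. Before doing so I would note that the map is well defined: since $q\in\delta S$, the preceding lemma guarantees that $p\ast q\in\beta S$ for every $p\in\beta S$, so $\rho_{q}$ genuinely sends $\beta S$ into $\beta S$.

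The heart of the argument is to unwind the definition of the extended operation and identify the preimage with a single basic set. Fix $A\subseteq S$ and set $B=\{a\in S\mid a^{-1}A\in q\}$. Here $B$ is a bona fide subset of $S$: for each $a\in S$ the set $a^{-1}A=\{t\in\phi(a)\mid a\ast t\in A\}$ is a subset of $S$, and since $q$ is an ultrafilter on $S$ the clause $a^{-1}A\in q$ is a meaningful condition on $a$. By the definition of $p\ast q$, for any $p\in\beta S$ we have $A\in p\ast q$ if and only if $B\in p$. I would then establish the key identity $\rho_{q}^{-1}(\overline{A})=\overline{B}$: indeed $p\in\rho_{q}^{-1}(\overline{A})$ iff $p\ast q\in\overline{A}$ iff $A\in p\ast q$ iff $B\in p$ iff $p\in\overline{B}$. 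Since $\overline{B}$ is basic, and hence open, this shows that $\rho_{q}^{-1}(\overline{A})$ is open, which completes the proof.

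There is no serious obstacle here; the argument is purely a matter of translating membership in $p\ast q$ into membership of the auxiliary set $B$ in $p$. The only points demanding a little care are confirming that $B$ is well defined as a subset of $S$ and that the defining clause of $p\ast q$ is exactly the condition $B\in p$ rather than something formally weaker. I would also remark that, because each $\overline{A}$ is in fact clopen in $\beta S$, the same computation shows $\rho_{q}^{-1}(\overline{A})$ is clopen, which is the form of the statement most convenient for later use.
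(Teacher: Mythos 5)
Your proof is correct and is the standard argument: the paper itself only cites \cite[Lemma 2.8]{HM01} for this lemma, and the computation $\rho_{q}^{-1}(\overline{A})=\overline{\{a\in S\mid a^{-1}A\in q\}}$ is exactly how that reference proves it. You have also correctly read the paper's slightly garbled definition of $p\ast q$ as meaning $\{a\in S\mid a^{-1}A\in q\}\in p$, and you rightly note where the hypothesis $q\in\delta S$ is used, namely to guarantee $p\ast q\in\beta S$.
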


\begin{proof}
\cite[Lemma 2.8]{HM01}.
\end{proof}
\begin{lem}
Let $p\in\beta S$ and let $q,r\in\delta S$. Then $p\ast(q\ast r)=(p\ast q)\ast r$.
\end{lem}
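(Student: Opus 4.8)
The plan is to prove the equality of these two ultrafilters by showing, for an arbitrary $A\subseteq S$, that $A\in p\ast(q\ast r)$ if and only if $A\in(p\ast q)\ast r$; since an ultrafilter is determined by its members, this suffices. First I would note that all the products involved are legitimate: by the earlier lemma we have $q\ast r\in\delta S$, so $p\ast(q\ast r)$ is defined through part (b) of the definition of $\ast$ on ultrafilters, and likewise $p\ast q\in\beta S$, so $(p\ast q)\ast r$ is defined. Since $q,r\in\delta S=\bigcap_{x\in S}\overline{\phi(x)}$, we also have $\phi(a)\in q$ and $\phi(a)\in r$ for every $a\in S$, a fact that will let me discard empty index sets.

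Next I would unfold the left-hand side. By definition, $A\in p\ast(q\ast r)$ iff $\{a\in S \mid a^{-1}A\in q\ast r\}\in p$, and in turn $a^{-1}A\in q\ast r$ iff $\{b\in S \mid b^{-1}(a^{-1}A)\in r\}\in q$. The key simplification uses the lemma stating that $c\in b^{-1}(a^{-1}A)$ iff both $b\in\phi(a)$ and $c\in(a\ast b)^{-1}A$: when $b\notin\phi(a)$ this set is empty, hence not a member of the proper filter $r$, whereas when $b\in\phi(a)$ it equals $(a\ast b)^{-1}A$. Therefore
\[
\{b\in S \mid b^{-1}(a^{-1}A)\in r\}=\{b\in\phi(a)\mid (a\ast b)^{-1}A\in r\},
\]
and so $A\in p\ast(q\ast r)$ iff $\{a\in S \mid \{b\in\phi(a)\mid (a\ast b)^{-1}A\in r\}\in q\}\in p$.

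I would then unfold the right-hand side symmetrically. Writing $B=\{a\in S \mid a^{-1}A\in r\}$, we have $A\in(p\ast q)\ast r$ iff $B\in p\ast q$ iff $\{d\in S \mid d^{-1}B\in q\}\in p$. Directly from the definitions of $d^{-1}B$ and of $B$, one computes $d^{-1}B=\{t\in\phi(d)\mid d\ast t\in B\}=\{t\in\phi(d)\mid (d\ast t)^{-1}A\in r\}$. Hence $A\in(p\ast q)\ast r$ iff $\{d\in S \mid \{t\in\phi(d)\mid (d\ast t)^{-1}A\in r\}\in q\}\in p$, which is literally the condition obtained for the left-hand side after renaming $a,b$ as $d,t$. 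The two membership criteria coincide, which finishes the argument.

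The only delicate point is the treatment of partiality: the expression $b^{-1}(a^{-1}A)$ must be handled with care because $a\ast b$ need not be defined, and its reduction to $(a\ast b)^{-1}A$ is valid precisely on $\phi(a)$. The fact that $r$ is a proper filter, so that $\emptyset\notin r$, is exactly what permits restricting the inner index set to $\phi(a)$ without affecting membership in $q$; once this observation is in place, no genuine obstacle remains beyond routine bookkeeping.
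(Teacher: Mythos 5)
Your argument is correct: the paper itself gives no proof of this lemma, merely citing \cite[Lemma 2.9]{HM01}, and your double unfolding of both sides---using $q\ast r\in\delta S$ to make the left side well defined, the lemma $b^{-1}(a^{-1}A)=(a\ast b)^{-1}A$ for $b\in\phi(a)$ together with $\emptyset\notin r$ to restrict the inner index set, and the direct computation of $d^{-1}B$ on the right---is exactly the standard computation that the cited reference carries out. The only cosmetic point is that your preliminary remark that $\phi(a)\in q$ and $\phi(a)\in r$ is never actually used; the discarding of empty index sets rests solely on $r$ being a proper filter.
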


\begin{proof}
\cite[Lemma 2.9]{HM01}.
\end{proof}
\begin{defn}
Let $p=p \ast p \in \delta S$ and let $A \in p$. Then $A^{\ast} = \{x \in A \mid x^{-1}A \in p\}$.
\end{defn}

Given an idempotent $p \in \delta S$ and $A \in p$, it is immediate that $A^{\ast} \in p$.
\begin{lem}\label{1.15}
Let $p= p \ast p \in \delta S$, let $A \in p$, and let $x \in A^{\ast}$. Then $x^{-1}A^{\ast} \in p$.
\end{lem}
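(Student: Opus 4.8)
The plan is to reduce everything to the single observation that $x^{-1}A^{\ast}$ coincides with $(x^{-1}A)^{\ast}$, after which the conclusion is immediate. First I would record the two facts that make the hypotheses usable: since $x\in A^{\ast}$ we have $x^{-1}A\in p$ by the very definition of $A^{\ast}$; and since $p\in\delta S\subseteq\overline{\phi(x)}$ we have $\phi(x)\in p$, so that $x^{-1}A$ is a genuine member of $p$ and the set $(x^{-1}A)^{\ast}$ is well defined.

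The core of the argument is the set equality $x^{-1}A^{\ast}=(x^{-1}A)^{\ast}$. Unwinding the definitions, $t\in x^{-1}A^{\ast}$ means precisely that $t\in\phi(x)$, that $x\ast t\in A$, and that $(x\ast t)^{-1}A\in p$. The first two conditions together say exactly $t\in x^{-1}A$. For the third condition I would invoke the earlier lemma on iterated preimages: since $t\in\phi(x)$, it yields $(x\ast t)^{-1}A=t^{-1}(x^{-1}A)$, so that $(x\ast t)^{-1}A\in p$ is equivalent to $t^{-1}(x^{-1}A)\in p$. Collecting these, $t\in x^{-1}A^{\ast}$ holds if and only if $t\in x^{-1}A$ and $t^{-1}(x^{-1}A)\in p$, which is exactly the statement that $t\in(x^{-1}A)^{\ast}$.

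To finish, since $x^{-1}A\in p$ and $p$ is idempotent, the remark that $B^{\ast}\in p$ whenever $B\in p$ applies with $B=x^{-1}A$, giving $(x^{-1}A)^{\ast}\in p$. Combined with the equality established above, this yields $x^{-1}A^{\ast}\in p$, as required.

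The only delicate point is the bookkeeping of domains inherent to the partial-semigroup setting: one must verify that $x\ast t$ is defined before writing $(x\ast t)^{-1}A$, which is guaranteed by $t\in\phi(x)$, and this is precisely the hypothesis under which the preimage identity $(x\ast t)^{-1}A=t^{-1}(x^{-1}A)$ is valid. Once these membership and definedness issues are handled, the computation is the exact partial-semigroup analogue of the classical idempotent calculation, and no further obstacle arises.
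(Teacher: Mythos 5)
Your proof is correct: the identity $x^{-1}A^{\ast}=(x^{-1}A)^{\ast}$, justified pointwise by the iterated-preimage lemma $t^{-1}(x^{-1}A)=(x\ast t)^{-1}A$ for $t\in\phi(x)$, together with the fact that $B^{\ast}\in p$ for $B\in p$, gives exactly the conclusion, and all definedness issues are handled. The paper itself only cites \cite[Lemma 2.12]{HM01} rather than reproducing an argument, and yours is the standard one underlying that reference.
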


\begin{proof}
\cite[Lemma 2.12]{HM01}.
\end{proof}

As a consequence of the above results, we have that if $(S, \ast)$ is an adequate 
partial semigroup, then $(\delta S, \ast)$ is a compact right topological semigroup. 
Being a compact right topological semigroup, $\delta S$ contains
idempotents, left ideals, a smallest two-sided ideal, and minimal idempotents. 
Thus $\delta S$ provides a suitable environment for considering the notion of central sets.

\begin{defn} Let $(S,*)$ be an adequate partial semigroup. A set 
$C\subseteq S$ is {\it central\/} if and only if there is an idempotent
$p\in\overline C\cap K(\delta S)$.\end{defn}

\begin{defn}
Let $(S,\ast)$ be an adequate partial semigroup and let\break
 $\left\langle y_{n}\right\rangle _{n=1}^{\infty}$
be a sequence in $S$. Then $\left\langle y_{n}\right\rangle _{n=1}^{\infty}$
is adequate if and only if $\prod_{n\in F}y_{n}$ is defined for each
$F\in\mathcal{P}_{f}(\mathbb{N})$ and for every $K\in\mathcal{P}_{f}(S)$,
there exists $m\in\mathbb{N}$ such that $FP(\left\langle y_{n}\right\rangle _{n=m}^{\infty})\subseteq\sigma(K)$.
\end{defn}

The following is the Central Sets Theorem for commutative adequate partial semigroup with a 
finite number of sequences proved in \cite{M05}. 

\begin{thm}\label{finitepcst}
Let $(S,\ast)$ be a commutative adequate partial semigroup  and let $C$ be a central subset of 
$S$. Let $k \in \mathbb{N}$ and for each $l\in\{1,2,\ldots,k\}$, let $\left\langle y_{l,n}\right\rangle _{n=1}^{\infty}$ 
be an adequate 
sequence in $S$. There exists a sequence $\left\langle a_{n}\right\rangle _{n=1}^{\infty}$ in $S$ and a 
sequence $\left\langle H_{n}\right\rangle _{n=1}^{\infty}$ in $\mathcal{P}_{f}(\mathbb{N})$ such that $\max H_{n} < \min H_{n+1}$ for each $n\in\mathbb{N}$ and such that for each $f:\mathbb{N}\rightarrow\{1,2,\ldots,k\}$, $$FP(\langle a_{n}\ast\prod_{t\in H_{n}}y_{f(n),t}\rangle _{n=1}^{\infty})\subseteq C.$$
\end{thm}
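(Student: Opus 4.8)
The plan is to follow the standard route for Central Sets Theorems, transported into the compact right topological semigroup $(\delta S,\ast)$. Since $C$ is central, fix a minimal idempotent $p\in\overline{C}\cap K(\delta S)$, so $C\in p$. Passing to the star set $C^{\ast}=\{x\in C\mid x^{-1}C\in p\}$, we have $C^{\ast}\in p$, and Lemma~\ref{1.15} guarantees that $x\in C^{\ast}$ forces $x^{-1}C^{\ast}\in p$; this propagation property is exactly what lets an inductive construction close up. I would build $\langle a_n\rangle$ and $\langle H_n\rangle$ by induction on $n$, maintaining the invariant that $\max H_i<\min H_{i+1}$ for $i<n$ and that for every nonempty $F\subseteq\{1,\dots,n\}$ and every $f\colon F\to\{1,\dots,k\}$ the element $\prod_{i\in F}\bigl(a_i\ast\prod_{t\in H_i}y_{f(i),t}\bigr)$ lies in $C^{\ast}$. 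Commutativity lets me treat these products as unordered, and adequacy of each sequence guarantees every $\prod_{t\in H_i}y_{f(i),t}$ is defined.

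For the inductive step, let $M$ be the finite collection of all products $w=\prod_{i\in F}(a_i\ast\prod_{t\in H_i}y_{f(i),t})$ available at stage $n$; by hypothesis each $w\in C^{\ast}$, so $w^{-1}C^{\ast}\in p$ by Lemma~\ref{1.15}. Put $A=C^{\ast}\cap\bigcap_{w\in M}w^{-1}C^{\ast}$, which lies in $p$ because $M$ is finite. The whole step then reduces to one \emph{simultaneous steering} claim: there exist $a_{n+1}\in S$ and $H_{n+1}\in\mathcal{P}_f(\mathbb{N})$ with $\min H_{n+1}>\max H_n$ such that $a_{n+1}\ast\prod_{t\in H_{n+1}}y_{j,t}\in A$ for every $j\in\{1,\dots,k\}$. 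Granting this, any new product (one with $n+1\in F$) factors as $w\ast\bigl(a_{n+1}\ast\prod_{t\in H_{n+1}}y_{j,t}\bigr)$ for some $w\in M$, or is the single block itself; membership in $A\subseteq w^{-1}C^{\ast}$ (respectively $A\subseteq C^{\ast}$) then places it in $C^{\ast}$, closing the induction, and the final conclusion follows since $C^{\ast}\subseteq C$ and each element of the relevant $FP$-set has precisely this form.

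The heart of the argument, and the step I expect to be the main obstacle, is the simultaneous steering claim. The difficulty is that the sequences $\langle y_{j,n}\rangle$ are arbitrary, so their products need not belong to $p$; minimality of $p$ is what makes steering possible at all. I would prove it through the hierarchy of largeness notions: because $p\in\overline{A}\cap K(\delta S)$, the set $A$ is piecewise syndetic in the partial-semigroup sense, and piecewise syndetic sets are $J$-sets, meaning that for any finite family of adequate sequences one can find a common $a$ and a common block $H$ driving all of them into $A$ simultaneously. Applying this to the family $\{\langle y_{j,n}\rangle_n\mid j\le k\}$ yields exactly the claim. This is where the adequacy hypothesis is used most heavily: it ensures each $\prod_{t\in H}y_{j,t}$ is defined, and, via the condition that $FP(\langle y_{j,n}\rangle_{n=m}^{\infty})\subseteq\sigma(K)$ for large $m$, it lets me take $\min H_{n+1}$ large enough that both $\max H_n<\min H_{n+1}$ and the definedness of every left multiplication $a_{n+1}\ast(\cdot)$ and of the iterated products $w\ast(\cdot)$ are guaranteed, keeping the entire construction inside the domain of $\ast$ on which $\delta S$ and Lemma~\ref{1.15} operate. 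Establishing the piecewise-syndetic-implies-$J$-set step in the partial-semigroup setting, together with all the definedness bookkeeping, is the genuinely technical part; the surrounding induction is then routine.
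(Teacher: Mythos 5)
Your induction skeleton is exactly the one the paper uses (for its stronger Theorem~\ref{adeprod}, from which this finite statement is then derived): pass to $C^{\star}$, use Lemma~\ref{1.15} to propagate membership in $p$, collect the finitely many products already built into a finite set $M\subseteq C^{\star}$, intersect $C^{\star}$ with $\bigcap_{w\in M}w^{-1}C^{\star}$, and close the induction with one ``simultaneous steering'' step. The gap is that the steering step --- which is the entire content of the theorem --- is asserted rather than proved. You propose to obtain it from the chain ``$A$ is central, hence piecewise syndetic, hence a $J$-set,'' but the second implication is itself a substantial theorem (in full commutative semigroups it is the key combinatorial result behind the De--Hindman--Strauss central sets theorem), and in the adequate partial semigroup setting it is neither formulated nor proved here; note that even in Section~3 the paper never shows $K(\delta S)\subseteq J_{\delta}(S)$. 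Deferring the heart of the proof to an unproved transfer of that implication leaves the argument incomplete.

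What the paper actually does (Lemma~\ref{conv}, taken from McLeod) is prove the steering claim directly from minimality of $p$, with no detour through piecewise syndeticity or $J$-sets. Working in $Y=\bigtimes_{i=1}^{m}\beta S$, one forms for each pair $(W,n)$ the set $I_{(W,n)}$ of tuples $\bigl(a\ast\prod_{t\in L}f_{1}(t),\ldots,a\ast\prod_{t\in L}f_{m}(t)\bigr)$ with $a\in\sigma(W)$, $\min L>n$ and $\prod_{t\in L}f_{i}(t)\in\sigma(W\ast a)$, together with $E_{(W,n)}=I_{(W,n)}\cup\{(a,\ldots,a)\mid a\in\sigma(W)\}$; one then sets $E=\bigcap cl_{Y}E_{(W,n)}$ and $I=\bigcap cl_{Y}I_{(W,n)}$ and shows $E$ is a compact subsemigroup of $\bigtimes_{i=1}^{m}\delta S$ with $I$ an ideal of $E$. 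Since $\bigtimes_{i=1}^{m}K(\delta S)=K(\bigtimes_{i=1}^{m}\delta S)$ and $\overline{p}=(p,\ldots,p)\in E$, one gets $\overline{p}\in K(E)\subseteq I$, and intersecting a basic neighbourhood of $\overline{p}$ determined by $B\in p$ with $I_{(W,n)}$ produces the desired $a\in\sigma(W)$ and $L$ with $\min L>r$. This construction also absorbs the definedness bookkeeping you worry about, since membership in $I_{(W,n)}$ already encodes $a\in\sigma(W)$ and $\prod_{t\in L}f_{i}(t)\in\sigma(W\ast a)$. To complete your proposal you would either have to supply this product-space ideal argument, or formulate and prove a piecewise-syndetic-implies-$J_{\delta}$-set theorem for adequate partial semigroups; as written, the key lemma is missing.
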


\begin{proof} \cite[Theorem 3.4]{M05} \end{proof}

Section 2 of this paper deals with a version of Central Sets Theorem for commutative 
adequate partial semigroup which deals with all adequate sequences in $S$ 
at once and we show that, as long as $S$ has infinitely many
adequate sequences, this result generalizes Theorem \ref{finitepcst}.
In Section 3, we define a $C$-set in an adequate partial semigroup to be
a set satisfying the conclusion of this new Central Sets Theorem   and establish a sufficient condition for being a $C$-set.

\begin{notation} Through out this document $\overline{A}$ stands for the 
closure of $A$ with respect to $\beta S$. Also, we use the 
notation $cl_{Y}(A)$ to denote the closure of $A$ with 
respect to $Y$, whenever $Y$ is other than $\beta S$.
\end{notation}

\section{New Central Sets Theorem For Commutative Adequate Partial Semigroups}

In this section, we establish a version of the Central Sets Theorem for commutative
adequate partial semigroups that applies to all adequate sequences at once. We start with some relevant definitions.

\begin{defn}
Let $W_1, W_2 \in \mathcal{P}_f(S)$, then define $W_1 \ast W_2=
\{w_1 \ast w_2 \mid w_1 \in W_1,  w_2 \in W_2 \; \text{and} \; w_1 \ast w_2 \;  \text{is defined}\}$.
\end{defn}

Now we recall the following lemma from \cite[Theorem 3.4]{M05}. For the sake of completeness we include the proof here.

\begin{lem}\label{conv}
Let $(S, \ast)$ be a commutative adequate partial semigroup, let $m$, $r\in\mathbb{N}$, and 
for each $i \in \{1,2,\ldots,m\}$,  let $f_i$ be an adequate sequence in $S$. Let $p$ be a 
minimal idempotent in $\delta S$, let $B\in p$, and let $W\in\mathcal{P}_{f}(S)$. There
exist $a\in\sigma(W)$ and $L\in\mathcal{P}_{f}(\mathbb{N})$ such that 
$\min L>r$, and for each $i\in\{1,2,\ldots,m\}$, $\prod_{t\in L} f_i(t)\in\phi(a)$ and 
$a\ast\prod_{t\in L}f_i(t)\in B$.
\end{lem}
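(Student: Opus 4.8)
The plan is to prove the statement by induction on $m$, the number of adequate sequences. Throughout, the key algebraic tool is the interaction between the idempotent $p$ and the starred sets $B^\ast$ provided by Lemma~\ref{1.15}, together with the fact that an adequate sequence eventually lands inside any prescribed $\sigma(K)$. First I would reduce to working with $B^\ast$ in place of $B$: since $p=p\ast p\in\delta S$ and $B\in p$, we have $B^\ast\in p$, and by Lemma~\ref{1.15}, for every $x\in B^\ast$ we have $x^{-1}B^\ast\in p$. This self-improving property of $B^\ast$ is what will let the induction close up, because the element $a\ast\prod_{t\in L}f_i(t)$ we eventually produce must again lie in a starred set to feed the next stage of a larger construction (and it is what the enveloping theorem of Section~2 will need).

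For the base case $m=1$, I would proceed as follows. Because $p$ is a minimal idempotent in $\delta S$ and $W\in\mathcal{P}_f(S)$, the set $\sigma(W)$ belongs to $p$ (this is essentially the content of adequacy combined with $p\in\delta S=\bigcap_{H}cl_{\beta S}(\sigma(H))$), so $B^\ast\cap\sigma(W)\in p$ and in particular is nonempty. The real work is to choose the block-product $\prod_{t\in L}f_1(t)$ correctly. Using adequacy of the sequence $f_1$, there is an $m_0\in\mathbb{N}$ with $FP(\langle f_1(n)\rangle_{n=m_0}^\infty)\subseteq\sigma(W)$; enlarging if necessary I take $\min L>r$. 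The plan is to pick $a\in B^\ast\cap\sigma(W)$ first, and then, knowing $a$, use the property $a^{-1}B^\ast\in p$ together with a further application of adequacy (now relative to the finite set $W\cup\{a\}$) to locate a block $L$ with $\min L>r$ such that $\prod_{t\in L}f_1(t)\in a^{-1}B^\ast\cap\phi(a)$. Unwinding the definition of $a^{-1}B^\ast$ gives both $\prod_{t\in L}f_1(t)\in\phi(a)$ and $a\ast\prod_{t\in L}f_1(t)\in B^\ast\subseteq B$, which is exactly the conclusion for $m=1$.

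For the inductive step, suppose the result holds for $m-1$ sequences. The strategy is to handle the sequences $f_1,\dots,f_{m-1},f_m$ simultaneously by choosing one common element $a$ and one common block $L$ so that all $m$ memberships hold at once. Concretely, I would form the finite set $\bigcap_{i=1}^m\big(a^{-1}B^\ast\big)$-type condition as follows: first select $a\in B^\ast\cap\sigma(W)$, which forces $a^{-1}B^\ast\in p$; then the finite intersection $\bigcap_{i=1}^m(a^{-1}B^\ast)$ is again in $p$ and hence meets every $\sigma(K)$. Using adequacy of each $f_i$, I choose a single large index $m_0$ beyond which $FP(\langle f_i(n)\rangle)$ lands in $\sigma(W\cup\{a\})$ for all $i$ simultaneously, take $\min L>\max\{r,m_0\}$, and pick $L$ so that the common product-element $\prod_{t\in L}f_i(t)$ lies in $a^{-1}B^\ast$ for each $i$ at the same time. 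Commutativity is essential here: it guarantees $a\ast\prod_{t\in L}f_i(t)=\prod_{t\in L}f_i(t)\ast a$ is defined and lets me treat all the $\phi$-conditions uniformly.

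The main obstacle I anticipate is the \emph{simultaneity} of the choice of $L$: we need a single finite block $L$ that works for all $m$ sequences $f_1,\dots,f_m$ at once, whereas the defining property of an adequate sequence only controls tails of a single sequence. The resolution is that $\bigcap_{i=1}^m(a^{-1}B^\ast)$ is a finite intersection of members of the ultrafilter $p$ and is therefore itself a member of $p$, so it has nonempty intersection with $\sigma(W\cup\{a\})$; one then shows that, after passing far enough out along all the adequate sequences, the relevant finite products can be arranged to lie in this common set. A secondary delicate point is verifying that $a\ast\prod_{t\in L}f_i(t)$ is genuinely \emph{defined} for each $i$, which is where adequacy (ensuring $\prod_{t\in L}f_i(t)\in\phi(a)$) and the partial-semigroup associativity of Lemma~\ref{1.12} and its companions must be invoked carefully rather than taken for granted.
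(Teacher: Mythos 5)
Your argument never uses the minimality of $p$ --- only that it is an idempotent --- and that is the tell-tale sign of the gap. The step that fails is the central one: having chosen $a\in B^\ast\cap\sigma(W)$, you assert that one can ``pick $L$ so that the common product-element $\prod_{t\in L}f_i(t)$ lies in $a^{-1}B^\ast$ for each $i$.'' Knowing $a^{-1}B^\ast\in p$ tells you this set is a member of the ultrafilter $p$; it does not tell you that it meets the countable set $FP(\langle f_i(t)\rangle_{t=m_0}^{\infty})$, which is a fixed set determined by the given sequence and need not belong to $p$ nor intersect any particular member of $p$. Adequacy only controls \emph{definedness} (tail products land in $\sigma(K)$ for prescribed finite $K$); it gives no information about the products landing in a prescribed member of $p$. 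Arranging $a\ast\prod_{t\in L}f_i(t)\in B$ simultaneously for several prescribed sequences is exactly the hard content of the lemma, and it cannot hold for an arbitrary idempotent: Section~3 of the paper exists precisely because one must isolate the extra hypothesis ($p\in J_{\delta}(S)$, i.e.\ every member of $p$ is a $J_{\delta}$-set) that a non-minimal idempotent may lack. Your proposed induction on $m$ also does no work: the ``inductive step'' never invokes the case $m-1$; it just repeats the base-case argument for all $m$ sequences at once.

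The paper's proof is structurally different and does not reduce to this kind of pointwise ultrafilter manipulation. It works in $Y=\bigtimes_{i=1}^m\beta S$, defines for each $(W,n)$ the set $I_{(W,n)}$ of tuples $\bigl(a\ast\prod_{t\in L}f_1(t),\ldots,a\ast\prod_{t\in L}f_m(t)\bigr)$ together with an enlargement $E_{(W,n)}$, and shows that $E=\bigcap cl_Y E_{(W,n)}$ is a compact subsemigroup of $\bigtimes_{i=1}^m\delta S$ in which $I=\bigcap cl_Y I_{(W,n)}$ is a nonempty ideal. Minimality of $p$ then enters decisively: $\overline p=(p,\ldots,p)$ lies in $K\bigl(\bigtimes_{i=1}^m\delta S\bigr)=\bigtimes_{i=1}^m K(\delta S)$, hence $\overline p\in K(E)\subseteq I$, so the neighbourhood $\bigtimes_{i=1}^m\overline B$ of $\overline p$ meets $I_{(W,n)}$ --- which is exactly the conclusion. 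To repair your approach you would need an argument of this kind (or some other proof that every member of $p$ is a $J_{\delta}$-set); the starred-set bookkeeping with $B^\ast$ belongs to the deduction of Theorem~\ref{adeprod} \emph{from} this lemma, not to the proof of the lemma itself.
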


\begin{proof}
Let $\mathcal{D}=\mathcal{P}_f(S)\times \{r+1,r+2, \ldots\}$ be a directed set with ordering 
defined by $(W_2,n_2) \geq (W_1,n_1)$ if $W_1 \subseteq W_2$ and $n_1 \leq n_2$. For each $(W,n) \in \mathcal{D}$ 
we define $I_{(W,n)} \subseteq \bigtimes_{i=1}^m\sigma(W)$ to be the set of 
elements of the form $\big(a \ast \prod_{t \in L}f_1(t), a \ast \prod_{t \in L}f_2(t), \ldots, a \ast 
\prod_{t \in L}f_{m}(t)\big)$, such that $a \in \sigma(W)$ and $L \in \mathcal{P}_f(\mathbb{N})$ 
with $\min L > n$ and $ \prod_{t \in L}f_i(t) \in \sigma(W \ast a)$ for every 
$i \in \{1,2,\ldots,m\}$. Define also the set $E_{(W,n)}=I_{(W,n)} 
\cup \{(a, a, \ldots, a) : a \in \sigma(W)\}$.

Let $Y= \bigtimes_{i=1}^m \beta S$. Let $$I= \bigcap_{(W,n)\in \emph{D}}cl_Y I_{(W,n)}
\text{ and let }E= \bigcap_{(W,n)\in \emph{D}}cl_Y E_{(W,n)}\,.$$
To see that $I\neq\emptyset$, note that if $(W_1,n_1)$ and $(W_2,n_2)$ are in ${\mathcal D}$, 
then $I_{(W_1 \cup W_2, \max \{n_1, n_2\})} 
\subseteq I_{(W_1,n_1)} \cap I_{(W_2,n_2)}$. It thus suffices to let
$(W,n)\in{\mathcal D}$ and show that $I_{(W,n)}\neq\emptyset$.
Pick $a \in \sigma (W)$ and for each $i \in \{1,2,\ldots,m \}$, pick a natural 
number $k_i$ such that $FP(\langle f_i(t) \rangle_{t=k_i}^{\infty}) \subseteq 
\sigma (W \ast a)$. Let $k=\max\big\{k_i\mid i\in\{1,2,\ldots,m\}\big\}$. Then
$\big(a*f_1(k),a*f_2(k),\ldots,a*f_m(k)\big)\in I_{(W,n)}$.
Since $E_{(W,n)} \subseteq \bigtimes_{i=1}^m \sigma(W)$, that 
implies $E \subseteq \bigtimes_{i=1}^m \delta S$. Therefore, we have 
$\emptyset \neq I \subseteq E \subseteq \bigtimes_{i=1}^m \delta S$.

Let $\vec{q}$, $\vec{r}\in E$. We show that $\vec{q} \ast \vec{r} \in E$ 
and if either $\vec{q} \in I$ or $\vec{r} \in I$, then $\vec{q} \ast \vec{r} 
\in I$. Let $(W, n) \in \mathcal{D}$. Let $U$ be a neighbourhood of $\vec{q} 
\ast \vec{r}$ in $Y$. So for each $i \in \{1,2, \ldots, m\}$, pick $A_i \in 
q_i \ast r_i$ such that $\bigtimes_{i=1}^m \overline{A_i} \subseteq U$. By Lemma \ref{1.12} pick 
for each $i$, $B_i \in q_i$ such that $\rho_{r_i}[\,\overline{B_i}\,] \subseteq \overline{A_i}$. 
Let $V= \bigtimes_{i=1}^m \overline{B_i}$. Now pick $\vec{x} \in E_{(W,n)} \cap V$ with 
$\vec{x} \in I_{(W,n)}$ if $\vec{q} \in I$.
 
  If $\vec{x} \in I_{(W,n)}$,  take $\vec{x}=(a \ast \prod_{t \in L}f_1(t),   a \ast \prod_{t \in L}f_2(t),  
\ldots,  a \ast \prod_{t \in L}f_{m}(t))$, where $a \in \sigma(W)$ and $L \in \mathcal{P}_f(\mathbb{N})$ such that 
$\min L > n$ and $\prod_{t \in L}f_i(t) \in \sigma(W \ast a)$ for every $i \in \{1,2,\ldots,m\}$. 
Let $W^{\prime}=W \ast F$, where $$F=\{a,   a \ast \prod_{t \in L}f_1(t),   
a \ast \prod_{t \in L}f_2(t),  \ldots,  a \ast \prod_{t \in L}f_{m}(t)\}\,,$$ and let $n^{\prime}= \max L$. 
Then $(W^{\prime} ,  n^{\prime}) \in \mathcal{D}$. We claim that 
$\vec{x} \ast E_{(W^{\prime}, n^{\prime})} \subseteq I_{(W,n)}$. Let 
$\vec{y} \in \vec{x} \ast E_{(W^{\prime}, n^{\prime})}$. So, $\vec{y}= \vec{x} \ast \vec{z}$ for 
some $\vec{z} \in E_{(W^{\prime}, n^{\prime})}=I_{(W^{\prime}, n^{\prime})} \cup 
\{(a^{\prime}, a^{\prime}, \ldots , a^{\prime}) \mid a^{\prime} \in \sigma (W^{\prime})\}$. 
First we consider  $\vec{z}=(a^{\prime}, a^{\prime}, \ldots , a^{\prime})$ where   $a^{\prime} \in \sigma(W^{\prime})$. 
Thus we obtain $\vec{y}= (a \ast \prod_{t \in L}f_1(t) \ast a^{\prime},  
a \ast \prod_{t \in L}f_2(t) \ast a^{\prime},    \ldots,\break     
a \ast \prod_{t \in L}f_m(t) \ast a^{\prime})$. Since $a^{\prime} \in \sigma(W^{\prime})= \sigma(W \ast F)$, 
therefore, for all $w \in W$ and for each $i \in \{1,2, \ldots,m\}$, 
$w \ast a \ast \prod_{t \in L}f_i(t) \ast a^{\prime}$ is defined. This gives 
$a \ast a^{\prime} \ast \prod_{t \in L}f_i(t)=a \ast \prod_{t \in L}f_i(t) \ast a^{\prime} \in \sigma(W)$ 
and $a \ast a^{\prime} \in \sigma (W)$. Thus we obtain $\vec{y} \in I_{(W,n)}$. 
Next, we take $\vec{z} \in I_{(W^{\prime}, n ^{\prime})}$. Thus we assume $\vec{z} = 
(a^{\prime} \ast \prod_{t \in L^{\prime}}f_1(t),   a^{\prime} \ast \prod_{t \in L^{\prime}}f_2(t),   
\ldots,  a^{\prime} \ast \prod_{t \in L^{\prime}}f_m(t))$, where $a^{\prime} \in 
\sigma (W^{\prime})$ and $L^{\prime} \in \mathcal{P}_f(\mathbb{N})$ satisfying $\min L^{\prime} 
> n^{\prime}$ and $\prod_{t \in L^{\prime}}f_i(t) \in \sigma(W^{\prime} \ast a^{\prime})$ for 
each $i \in \{1,2, \ldots, m\}$. Then $\vec{y}= (a \ast \prod_{t \in L}f_1(t) \ast a^{\prime} \ast 
\prod_{t \in L^{\prime}}f_1(t),   a \ast \prod_{t \in L}f_2(t) \ast a^{\prime} \ast 
\prod_{t \in L^{\prime}}f_2(t),   \ldots,   a \ast \prod_{t \in L}f_m(t) \ast a^{\prime} 
\ast \prod_{t \in L^{\prime}}f_m(t))$, i.e., 
$$\vec{y}= (a \ast a^{\prime} \ast \prod_{t \in L \cup L^{\prime}}f_1(t),   
a \ast a^{\prime} \ast \prod_{t \in L \cup L^{\prime}}f_2(t),   \ldots,   
a  \ast a^{\prime} \ast \prod_{t \in L \cup L^{\prime}}f_m(t))\,.$$ For all $w \in W$, and for 
each $i \in \{1,2, \ldots,m\}$, $w \ast a \ast a^{\prime} \ast \prod_{t \in L \cup L^{\prime}}f_i(t)$ 
is defined and so $a \ast a^{\prime} \ast \prod_{t \in L \cup L^{\prime}}f_i(t) \in \sigma (W)$ 
and $a \ast a^{\prime} \in \sigma (W)$. Also note that $\min (L \cup L^{\prime}) > n$. Thus 
$\vec{y} \in I_{(W,n)}$. Therefore, we have $\vec{x} \ast E_{(W^{\prime}, n^{\prime})} \subseteq I_{(W,n)}$. 
 
If $\vec{x} \in E_{(W,n)}\setminus I_{(W,n)}$, we take $\vec{x}=(a, a,  \ldots,  a)$ where $a \in \sigma(W)$. Let 
$W^{\prime}=W \ast a $ and $n^{\prime}=n$. Then $(W^{\prime} ,  n^{\prime}) \in \mathcal{D}$. Our next claim is 
that $\vec{x} \ast E_{(W^{\prime} ,  n^{\prime})} \subseteq E_{(W,n)}$ and $\vec{x} \ast I_{(W^{\prime} ,  n^{\prime})} 
\subseteq I_{(W,n)}$. Let $\vec{y} \in \vec{x} \ast E_{(W^{\prime}, n^{\prime})}$, then, 
$\vec{y}= \vec{x} \ast \vec{z}$,  for some $\vec{z} \in E_{(W^{\prime}, n^{\prime})}= 
I_{(W^{\prime}, n^{\prime})} \cup \{(a^{\prime}, a^{\prime},  \ldots,   a^{\prime} ) \mid a^{\prime} 
\in \sigma (W^{\prime})\}$. If $\vec{z} = (a^{\prime},  a^{\prime},  \ldots,   
a^{\prime})$ for some $a^{\prime} \in \sigma (W^{\prime})$. Readily, 
$\vec{y}=(a \ast a^{\prime},   a \ast a^{\prime},   \ldots,   a \ast a^{\prime})$ and 
$a \ast a^{\prime} \in \sigma (W)$ by the construction of $W^{\prime}$. Now consider 
$\vec{z} \in I_{(W^{\prime}, n^{\prime})}$. Then $\vec{z}= (b \ast \prod_{t \in L^{\prime}} f_1(t),   
b \ast \prod_{t \in L^{\prime}} f_2(t),   \ldots,   b \ast \prod_{t \in L^{\prime}} f_m(t)) $, 
where $b \in \sigma (W^{\prime})$ and $L^{\prime} \in \mathcal{P}_f(\mathbb{N})$ satisfying 
$\min L^{\prime} > n^{\prime}$ and $b \ast \prod_{t \in L^{\prime}} f_i(t) \in \sigma (W^{\prime})$ 
for all $i \in \{1,2, \ldots, m\}$. Now $\vec{y}= (a \ast b \ast \prod_{t \in L^{\prime}}f_1(t),  
a \ast b \ast \prod_{t \in L^{\prime}}f_2(t),   \ldots,   a \ast b \ast \prod_{t \in L^{\prime}}f_m(t ))$. 
Since for each $i \in \{1,2,\ldots, m\}$, $b \ast \prod_{t \in L^{\prime}}f_i(t) \in \sigma (W \ast a)$  
then $a \ast b \ast \prod_{t \in L^{\prime}}f_i(t) \in \sigma (W) $ and $a \ast b \in \sigma (W)$. 
Therefore, $\vec{x} \ast E_{(W^{\prime} ,  n^{\prime})} \subseteq E_{(W,n)}$ and $\vec{x} \ast I_{(W^{\prime} ,  
n^{\prime})} \subseteq I_{(W,n)}$.
 
Since $\vec{x} \ast \vec{r} \in \bigtimes_{i=1}^m \overline{A_i}$, we have for each $i \in \{1,2, \ldots ,m\}$ 
that $x_i \ast r_i \in \overline{A_i}$ so $x_i^{-1} A_i \in r_i$. 
Then $O= \bigtimes_{i=1}^m \overline{x_i^{-1}A_i}$ is a neighbourhood 
of $\vec{r}$ so pick $\vec{y} \in O \cap E_{(W^{\prime}, n^{\prime})}$ with 
$\vec{y} \in I_{(W^{\prime}, n^{\prime})}$ if $\vec{r} \in I$.
Then $\vec{x} \ast \vec{y} \in U \cap E_{(W,n)}$ and if either $\vec{q} \in I$ or $\vec{r} \in I$, 
then $\vec{x} \ast \vec{y} \in U \cap I_{(W,n)}$. This gives $E$ is a subsemigroup of $\bigtimes_{i=1}^m \delta S$ 
and $I$ is an ideal of $E$.

Now $p \in K(\delta S)$. Our next claim is that $\overline{p}=(p, p, \ldots, p) \in E$. Let $U$ be a 
neighbourhood of $\overline{p}$ in $Y$ and let $(W,n)\in{\mathcal D}$. Pick $C \in p$ such that $\bigtimes_{i=1}^m \overline{C} \subseteq U$. 
Now $p \in \delta S \subseteq cl_{\beta S} \sigma(W)$,  therefore, $\sigma(W) \in p$. So, we can pick $a \in C \cap \sigma(W)$,  
and therefore, $(a, a, \ldots, a) \in U  \cap E_{(W,n)}$. 
By \cite[Theorem 2.23]{HS98} we have that $ \bigtimes_{i=1}^m K(\delta S)=K(\bigtimes_{i=1}^m \delta S)$, 
so $\overline{p} \in E \cap K(\bigtimes_{i=1}^m \delta S). $ Now by \cite[Theorem 1.65]{HS98}, we have that 
$\overline{p} \in K(E)$ and, since $I$ is an ideal of $E$, we have $\overline{p} \in I$. Since 
$\bigtimes_{i=1}^m \overline{B}$ is a neighbourhood of $\overline{p}$, then we have 
$\bigtimes_{i=1}^m \overline{B} \cap I_{(W,n)} \neq \emptyset$.
\end{proof}

Now we are in a situation to prove a stronger version of the Central Sets Theorem for 
commutative adequate partial semigroups.

\begin{defn} Let $(S,\ast)$ be a commutative adequate partial semigroup.
Then $\mathcal{T}$ is the set of all adequate sequences in $S$.
\end{defn}

\begin{thm}\label{adeprod}
Let $(S,\ast)$ be a commutative adequate partial semigroup and let $C$
be a central subset of $S$. There exist functions $\alpha:\mathcal{P}_{f}(\mathcal{T})\rightarrow S$
and $H:\mathcal{P}_{f}(\mathcal{T})\rightarrow\mathcal{P}_{f}(\mathbb{N})$
such that
\begin{itemize}
\item[(1)] $F,G\in\mathcal{P}_{f}(\mathcal{T})$ and $F\subsetneq G$,
then $\max H(F) < \min H(G)$ and
\item[(2)] whenever $m\in\mathbb{N}$, $G_{1},G_{2},\ldots,G_{m}\in\mathcal{P}_{f}(\mathcal{T})$, 
$G_{1}\subsetneq G_{2}\subsetneq\ldots\subsetneq G_{m}$, and for each 
$i\in\{1,2,\ldots,m\}$, $f_i \in G_{i}$, one 
has\hfill\break 
$\prod_{i=1}^{m}(\alpha(G_{i})\ast\prod_{t\in H(G_{i})}f_i(t))\in C$.
\end{itemize}
\end{thm}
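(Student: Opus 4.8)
The plan is to build the functions $\alpha$ and $H$ by transfinite recursion on the size of the finite set $F\in\mathcal{P}_f(\mathcal{T})$, mirroring the classical proof of the De--Hindman--Strauss theorem (Theorem 2.2 of \cite{DHS08}) but working inside $\delta S$ and using Lemma~\ref{conv} as the engine that produces the simultaneous "jump." First I would fix a minimal idempotent $p\in\overline{C}\cap K(\delta S)$ witnessing centrality, and set $B=C^{\ast}$, noting $B\in p$ and that by Lemma~\ref{1.15} we have $x^{-1}B^{\ast}\in p$ for every $x\in B^{\ast}$; keeping everything inside the starred set is what will let finite products stay in $C$. The recursion is indexed so that $\alpha(F)$ and $H(F)$ are defined once $\alpha(G)$, $H(G)$ are known for all $G\subsetneq F$. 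At a given $F=\{g_1,\dots,g_m\}$ I will apply Lemma~\ref{conv} with the family of adequate sequences $g_1,\dots,g_m$, with $r=\max\{\,\max H(G)\mid G\subsetneq F\,\}$ to force the min-max gap condition (1), with the window $W$ collecting all the finitely many partial products already produced by proper subsets (so that adequacy/definedness is preserved), and with $B$ replaced by the appropriate starred set $B^{\ast}$ so that condition (2) is maintained multiplicatively.

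The key recursive bookkeeping is the following. For each chain $G_1\subsetneq\cdots\subsetneq G_{m-1}\subsetneq F$ and each choice $f_i\in G_i$, the prefix product $\prod_{i=1}^{m-1}\bigl(\alpha(G_i)\ast\prod_{t\in H(G_i)}f_i(t)\bigr)$ has already been constructed and, by the inductive hypothesis, lies in $C$; I will in fact arrange inductively that each such prefix lies in $B^{\ast}$, and hence that its inverse image of $B^{\ast}$ is again in $p$ by Lemma~\ref{1.15}. Intersecting these finitely many sets $x^{-1}B^{\ast}$ (over all chains below $F$, all prefix choices, and the relevant $f_i\in F$) together with $B^{\ast}$ itself yields a single set $B_F\in p$. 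I then feed $B_F$ into Lemma~\ref{conv} to obtain $a=\alpha(F)\in\sigma(W)$ and $L=H(F)$ with $\min L>r$ such that for each $f\in F$, $\prod_{t\in L}f(t)\in\phi(a)$ and $a\ast\prod_{t\in L}f(t)\in B_F\subseteq B^{\ast}$. Because $a\ast\prod_{t\in L}f(t)$ now lies in every relevant $x^{-1}B^{\ast}$, the extended product $x\ast\bigl(a\ast\prod_{t\in L}f(t)\bigr)$ again lands in $B^{\ast}$, which both closes the induction and, since $B^{\ast}\subseteq C$, gives membership in $C$ as required by (2).

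The main obstacle, and the place demanding genuine care, is the definedness of the partial operation throughout: unlike the full-semigroup case of \cite{DHS08}, here $\prod_{i=1}^m\bigl(\alpha(G_i)\ast\prod_{t\in H(G_i)}f_i(t)\bigr)$ is only meaningful if all the intermediate $\ast$-products are defined, so the choice of the window $W$ at each stage must be rich enough that every future factor lies in $\sigma$ of all previously chosen factors. This is precisely why Lemma~\ref{conv} delivers $a\in\sigma(W)$ and arranges $\prod_{t\in L}f_i(t)\in\sigma(W\ast a)$; I must verify that collecting all earlier partial products (and their $\ast$-products with the new $a$) into $W$ propagates definedness along every admissible chain, using commutativity to rearrange factors freely. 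A secondary subtlety is that condition (2) quantifies over \emph{arbitrary} chains $G_1\subsetneq\cdots\subsetneq G_m$ and arbitrary selections $f_i\in G_i$, not just chains ending at the current $F$; handling this cleanly is exactly what the device of passing to the starred set $B^{\ast}$ and invoking Lemma~\ref{1.15} accomplishes, since it reduces the infinitely many chain-conditions to the single recursively maintained invariant "every admissible product lies in $B^{\ast}$."
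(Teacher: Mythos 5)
Your proposal is correct and follows essentially the same route as the paper's own proof: induction on $|F|$ maintaining the invariant that all chain-products ending at $F$ lie in $C^{\star}$, taking $W$ to be the finite set $M$ of previously built prefix products, intersecting $C^{\star}$ with $\bigcap_{x\in M}x^{-1}C^{\star}$, and invoking Lemma~\ref{conv} with $\min L>r$. The only quibble is the redundant double-starring ($B=C^{\ast}$ followed by $B^{\ast}$), which is harmless since $(C^{\ast})^{\ast}=C^{\ast}$ by Lemma~\ref{1.15}.
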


\begin{proof}
Let $p$ be a minimal idempotent in $\delta S$, let $C\in p$, and let 
$C^{\star}=\{x\in C:x^{-1}C\in p\}$. Then $C^{\star}\in p$ and by Lemma \ref{1.15}, if 
$x\in C^{\star}$, then $x^{-1}C^{\star}\in p$.

We define $\alpha(F)\in S$ and $H(F)\in\mathcal{P}_{f}(\mathbb{N})$
for $F\in\mathcal{P}_{f}(\mathcal{T})$ by using induction
on $|F|$, the cardinality of $F$, satisfying the following
inductive hypotheses:
\begin{itemize}
\item[(1)] if $\emptyset\neq G\subsetneq F$, then $\max H(G) < \min H(F)$
and 
\item[(2)] if $n\in\mathbb{N}$, $\emptyset\neq G_{1}\subsetneq G_{2}\subsetneq\ldots\subsetneq G_{n}=F$,
and $\langle f_{i}\rangle_{i=1}^{n}\in\bigtimes_{i=1}^{n}G_{i}$, then
$\prod_{i=1}^{n}(\alpha(G_{i})\ast\prod_{t\in H(G_{i})}f_i(t))\in C^{\star}$.
\end{itemize}

This will suffice to prove the theorem.

 First we assume that $|F|=1$ so that $F=\{f\}$ for some $f \in \mathcal{T}$. By 
Lemma \ref{conv} pick $a\in S$ and $L\in\mathcal{P}_{f}(\mathbb{N})$
such that $\prod_{t\in L}f(t)\in\phi(a)$ and $a\ast\prod_{t\in L}f(t)\in C^{\star}$.
Put $\alpha(F)=a$ and $H(F)=L$. Hypothesis (1) holds vacuously. To verify hypothesis (2), 
let $n \in \mathbb{N}$, $\emptyset \neq G_1 \subsetneq G_2 \subsetneq \ldots \subsetneq G_n=F$, and 
$\langle f_i \rangle_{i=1}^n \in \bigtimes_{i=1}^n G_i$. Then necessarily $n=1$ 
and $f_n=f$, so $ \prod_{i=1}^{n}(\alpha(G_{i})\ast\prod_{t\in H(G_{i})}f_i(t)) 
=a \ast \prod_{t \in L} f(t) \in C^\star$.

Now assume that $|F|>1$ and $\alpha(G)$ and $H(G)$ have been defined for every nonempty proper subset 
$G$ of $F$. Let $K=\bigcup\{H(G):\emptyset\neq G\subsetneq F\}$
and let $r=\max K$. Define
\[
\begin{array}{rl}
M= & \big\{\prod_{i=1}^{n}(\alpha(G_{i})\ast\prod_{t\in H(G_{i})}f_{i}(t)) \mid n\in\mathbb{N},\\
& \emptyset\neq G_{1}\subsetneq G_{2}\subsetneq\ldots\subsetneq G_{n}\subsetneq F,\mbox{and}\langle f_{i}\rangle_{i=1}^{n}\in\bigtimes_{i=1}^{n}G_{i}\big\}.
\end{array}
\]

Then by hypothesis (2), $M$ is a finite subset of $C^\star$. Let $A=C^{\star}\cap\bigcap_{x\in M}x^{-1}C^{\star}$.
Then $A\in p$. Using Lemma \ref{conv}, pick $a\in\sigma(M)$ and $L\in\mathcal{P}_{f}(\mathbb{N})$
such that $\min L>r$, and for each $f\in F$, $\prod_{t\in L}f(t)\in\phi(a)$ and $a\ast\prod_{t\in L}f(t)\in A$. 
Let $\alpha(F)=a$ and $H(F)=L$.

Since $\min L > r$, hypothesis (1) holds. To verify hypothesis (2), let $n\in\mathbb{N}$,
let $\emptyset\neq G_{1}\subsetneq G_{2}\subsetneq\ldots\subsetneq G_{n}=F$,
and let $\langle f_{i}\rangle_{i=1}^{n}\in\bigtimes_{i=1}^{n}G_{i}$. If $n=1$, then
$\prod_{i=1}^{n}(\alpha(G_{i})\ast\prod_{t\in H(G_{i})}f_{i}(t))
=a\ast\prod_{t\in L}f(t)\in A \subseteq C^\star$.

 Now assume that $n>1$ and let $y = \prod_{i=1}^{n-1}(\alpha(G_{i})\ast\prod_{t\in H(G_{i})}f_{i}(t))$.
Then $y\in M$, so  $a \in \sigma(M) \subseteq \phi(y)$ and $a \ast \prod_{t \in L} f_n(t) \in A \subseteq y^{-1}C^\star$. Therefore, $\prod_{i=1}^{n}(\alpha(G_{i})\ast\prod_{t\in H(G_{i})}f_{i}(t))= y \ast a \ast \prod_{t \in L}f_n(t) \in C^\star$.
\end{proof}

We show now that Theorem \ref{adeprod} does generalize \cite[Theorem 3.4]{M05}, assuming only
that there are infinitely many adequate sequences in $S$.  Unless $|S|=1$, there are certainly
infinitely many sequences. And it is easy to show that if $S$ is countably infinite, then for
each $a\in S$, there is an adequate sequence $f$ with $f(1)=a$.

\begin{cor}
Let $(S, \ast)$ be a commutative adequate partial semigroup and assume that ${\mathcal T}$ is infinite. 
Let $k \in \mathbb{N}$, for 
each $l \in \{1,2, \ldots, k\}$ let $f_l \in \mathcal{T}$, and let $C$ be central  
in $S$. There exist a sequence $\langle a_n \rangle_{n=1}^{\infty}$ in $S$ and a sequence 
$\langle H_n \rangle_{n=1}^{\infty}$ in $\mathcal{P}_f(\mathbb{N})$ such that  
$\max H_n < \min H_{n+1}$ for all $n$ and for each $g : \mathbb{N} \rightarrow 
\{1,2, \ldots, k\}$,  $FP(\langle a_n \ast \prod_{t \in H_n}f_{g(n)}(t) 
\rangle_{n=1}^{\infty}) \subseteq C$.
\end{cor}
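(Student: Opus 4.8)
The plan is to deduce the Corollary directly from Theorem \ref{adeprod} by feeding it a single cofinal chain of finite subsets of $\mathcal{T}$, each of which contains all $k$ of the given sequences. First I would apply Theorem \ref{adeprod} to the central set $C$ to obtain the functions $\alpha:\mathcal{P}_{f}(\mathcal{T})\rightarrow S$ and $H:\mathcal{P}_{f}(\mathcal{T})\rightarrow\mathcal{P}_{f}(\mathbb{N})$ satisfying its conclusions (1) and (2). All of the combinatorial content is then packaged in these two functions, and the remaining task is purely to choose the right chain of arguments at which to evaluate them.

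Next I would construct that chain. Put $B=\{f_{1},f_{2},\ldots,f_{k}\}$, a nonempty finite subset of $\mathcal{T}$. Since $\mathcal{T}$ is infinite, $\mathcal{T}\setminus B$ is infinite, so I can pick pairwise distinct $h_{1},h_{2},h_{3},\ldots\in\mathcal{T}\setminus B$ and set $G_{n}=B\cup\{h_{1},\ldots,h_{n}\}$ for each $n\in\mathbb{N}$. Then $G_{1}\subsetneq G_{2}\subsetneq\cdots$ is a strictly increasing chain in $\mathcal{P}_{f}(\mathcal{T})$ with $\{f_{1},\ldots,f_{k}\}\subseteq G_{n}$ for every $n$. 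Finally I would define $a_{n}=\alpha(G_{n})$ and $H_{n}=H(G_{n})$.

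Then I would verify the two conclusions. The relation $\max H_{n}<\min H_{n+1}$ is immediate from conclusion (1) of Theorem \ref{adeprod} applied to $G_{n}\subsetneq G_{n+1}$. For the main property, fix $g:\mathbb{N}\rightarrow\{1,2,\ldots,k\}$ and $F\in\mathcal{P}_{f}(\mathbb{N})$, say $F=\{n_{1}<n_{2}<\cdots<n_{m}\}$. Since $S$ is commutative the product $\prod_{n\in F}(a_{n}\ast\prod_{t\in H_{n}}f_{g(n)}(t))$ equals $\prod_{i=1}^{m}(\alpha(G_{n_{i}})\ast\prod_{t\in H(G_{n_{i}})}f_{g(n_{i})}(t))$; now $G_{n_{1}}\subsetneq\cdots\subsetneq G_{n_{m}}$ and $f_{g(n_{i})}\in B\subseteq G_{n_{i}}$ for each $i$, so conclusion (2) of Theorem \ref{adeprod}, applied to the chain $G_{n_{1}},\ldots,G_{n_{m}}$ with the choices $f_{g(n_{i})}\in G_{n_{i}}$, shows this product lies in $C$. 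As $F$ and $g$ were arbitrary, $FP(\langle a_{n}\ast\prod_{t\in H_{n}}f_{g(n)}(t)\rangle_{n=1}^{\infty})\subseteq C$ for every $g$, as required.

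The only place where a hypothesis is genuinely used, and hence the only real obstacle, is the construction of the infinite strictly increasing chain: this needs $\mathcal{T}$ to be infinite, which is exactly the standing assumption. If $\mathcal{T}$ were finite one could not keep enlarging the $G_{n}$, and the passage from the ``all sequences at once'' theorem to the finite-sequence statement would fail. Everything else is bookkeeping, the one mild point of care being that the given $f_{l}$ need not be distinct, which is harmless since I only use the finite set $B$ they span.
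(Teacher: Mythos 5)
Your proposal is correct and follows essentially the same route as the paper: apply Theorem \ref{adeprod} to $C$, build the strictly increasing chain $G_{n}=\{f_{1},\ldots,f_{k}\}\cup\{h_{1},\ldots,h_{n}\}$ using infinitely many distinct elements of $\mathcal{T}$ outside $\{f_{1},\ldots,f_{k}\}$, set $a_{n}=\alpha(G_{n})$, $H_{n}=H(G_{n})$, and invoke conclusion (2) on the subchain indexed by an arbitrary $F\in\mathcal{P}_{f}(\mathbb{N})$. No gaps.
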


\begin{proof} Pick functions $\alpha$ and $H$ as guaranteed by Theorem \ref{adeprod} for $C$.
Now choose for each $n\in{\mathbb N}$, $\gamma_n \in \mathcal{T} \setminus \{f_1, \ldots, f_k \}$ such 
that $\gamma_n \neq \gamma_m$ if $n \neq m$. 
For $n \in \mathbb{N}$ define $G_n = \{f_1,f_2 \ldots, f_k \} \cup \{\gamma_1, \gamma_2,\ldots, \gamma_n\}$.
Also, define $a_n = \alpha(G_n)$ and $H_n = H(G_n)$ for $n \in \mathbb{N}$.
Since $G_n\subsetneq G_{n+1}$ for each $n$, we have $\max H_n<\min H_{n+1}$ for each $n$.

Now let $g : \mathbb{N} \to \{1, 2, \ldots, k \}$ and let $F\in{\mathcal P}_f({\mathbb N})$.
Enumerate $F$ in order as $\langle i(j)\rangle_{j=1}^m$.
Then $\emptyset\neq G_{i(1)}\subsetneq G_{i(2)}\subsetneq\ldots\subsetneq G_{i(m)}$, and 
for each $j\in\{1,2,\ldots,m\}$, $f_{g(i(j))}\in G_{i(j)}$ so
$$\prod_{n\in F}\big(a_n \ast \prod_{t \in H_n}f_{g(n)}(t)\big)=
\prod_{j=1}^m\big(\alpha(G_{i(j)})*\prod_{t\in H(G_{i(j)})}f_{g(i(j))}(t)\big)\in C\,.$$
\end{proof}

\begin{remark}
The author has recently learned that a result nearly identical to Theorem {\rm \ref{adeprod}} has 
been independently proved by K. Pleasant in \cite{KP17}.

\end{remark}
\section{Sets Satisfying The New Central Sets Theorem}

We start this section by defining a new notion of sets which satisfy the conclusion of Theorem \ref{adeprod}.
\begin{defn}\label{cset}
Let $(S,\ast)$ be a commutative adequate partial semigroup. Then a set $A$ is said to be a $C$-set if and only if there exist functions
$\alpha:\mathcal{P}_{f}(\mathcal{T})\rightarrow S$ and $H:\mathcal{P}_{f}(\mathcal{T})\rightarrow\mathcal{P}_{f}(\mathbb{N})$
such that
\begin{itemize}
\item[(1)] if $F,G\in\mathcal{P}_{f}(\mathcal{T})$ and $F\subsetneq G$,
then $\max H(F) < \min H(G)$ and
\item[(2)] whenever $m\in\mathbb{N}$, $G_{1},G_{2},\ldots,G_{m}\in\mathcal{P}_{f}(\mathcal{T})$, 
$G_{1}\subsetneq G_{2}\subsetneq\ldots\subsetneq G_{m}$
and for each $i\in\{1,2,\ldots,m\}$, $f_i \in G_{i}$,
one has\hfill\break
$\prod_{i=1}^{m}(\alpha(G_{i})\ast\prod_{t\in H(G_{i})}f_i(t)\in A$.
\end{itemize}
\end{defn}

\begin{defn}
Let $(S,\ast)$ be a commutative adequate partial semigroup.\\
(a) A set $A\subseteq S$ is a $J_{\delta}$-set if and only if whenever 
$F\in\mathcal{P}_{f}(\mathcal{T})$, $W\in\mathcal{P}_{f}(S)$, there exist
$a\in\sigma(W)$ and $H\in\mathcal{P}_{f}(\mathbb{N})$ such
that for each $f\in F$, $\prod_{t\in H}f(t)\in\sigma(W\ast a)$ and
$a\ast\prod_{t\in H}f(t)\in A$.\\
(b) $J_{\delta}(S)=\{p\in\delta S:$ for all $A\in p\,,\,A$ is a $J_{\delta}$-set$\}$.
\end{defn}

\begin{lem}\label{con}
Let $(S,\ast)$ be a commutative adequate partial semigroup and let $A$ be a $J_{\delta}$-set in $S$. Then for all
$F\in\mathcal{P}_{f}(\mathcal{T})$, and all $r\in\mathbb{N}$ and
$W\in\mathcal{P}_{f}(S)$, there exist $a\in\sigma(W)$ and $H\in\mathcal{P}_{f}(\mathbb{N})$
such that $\min H>r$ and for all $f\in F$, $\prod_{t\in H}f(t)\in\sigma(W\ast a)$
and $a\ast\prod_{t\in H}f(t)\in A$.
\end{lem}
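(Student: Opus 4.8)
The statement to prove is Lemma~\ref{con}: it upgrades the definition of a $J_\delta$-set by adding the control condition $\min H > r$. The plan is to reduce this to the bare definition of a $J_\delta$-set by enlarging the finite set $W$ so that the small-index terms of each sequence are automatically excluded. The key observation is that each $f \in F$ is an \emph{adequate} sequence, so for any finite $K \in \mathcal{P}_f(S)$ there is a threshold $m$ beyond which all finite products $FP(\langle f(n)\rangle_{n=m}^\infty)$ land inside $\sigma(K)$; combined with the tail-freeness of adequacy, this lets me force the witnessing set $H$ to have large minimum.

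First I would fix $F \in \mathcal{P}_f(\mathcal{T})$, $r \in \mathbb{N}$, and $W \in \mathcal{P}_f(S)$. The idea is to run the $J_\delta$-set definition not with $W$ itself but with a larger set $W'$ that encodes the constraint $\min H > r$. Concretely, I would build $W'$ so that whenever the definition returns some $H$ with $\prod_{t\in H}f(t) \in \sigma(W' \ast a)$, the membership forces $H \cap \{1,2,\ldots,r\} = \emptyset$, because the finitely many products $\prod_{t \in H}f(t)$ that use any index $\le r$ would have to involve one of finitely many ``forbidden'' low-index factors, which I arrange to be incompatible with $\sigma(W')$. More carefully, for each $f \in F$ and each nonempty $E \subseteq \{1,2,\ldots,r\}$ with $\prod_{t\in E}f(t)$ defined, I collect these finitely many partial products into a finite set $D$, and set $W' = W \cup D$. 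The adequacy of each $f$ then guarantees, via the threshold $m$ in the definition of adequate sequence, that the witness $H$ the $J_\delta$-definition produces can be taken with $\min H$ large.

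The cleanest route, however, is to lean directly on adequacy rather than on a delicate incompatibility argument. I would invoke the $J_\delta$-set definition with the enlarged constraint set and then show the returned $H$ may be shifted to a tail: since each $f\in F$ is adequate, pick $m > r$ so large that $FP(\langle f(n)\rangle_{n=m}^\infty) \subseteq \sigma(W\ast a)$ for every $f \in F$ simultaneously (take the max of the finitely many thresholds). The heart of the matter is that the defining property of a $J_\delta$-set is stated for arbitrary $W$, and by choosing $W$ to absorb the low-index data I can re-express the desired large-$\min$ conclusion as an instance of the plain definition. Thus I expect the proof to consist of: (i) choosing the correct enlargement $W'$ of $W$ using the adequacy thresholds of the members of $F$; (ii) applying the definition of $J_\delta$-set to $(F, W')$ to obtain $a \in \sigma(W')\subseteq\sigma(W)$ and some $H$; and (iii) verifying that $\min H > r$ follows, together with $\prod_{t\in H}f(t)\in\sigma(W\ast a)$ and $a\ast\prod_{t\in H}f(t)\in A$ for each $f\in F$.

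The main obstacle is step (iii): ensuring that the enlargement $W'$ genuinely forces $\min H > r$ rather than merely making the conclusion plausible. The subtlety is that the $J_\delta$-definition only promises \emph{some} witness $H$, not one avoiding small indices, so I cannot simply discard low-index elements of a given $H$ without breaking the membership $\prod_{t\in H}f(t)\in\sigma(W'\ast a)$. The resolution should exploit that in a \emph{commutative} adequate partial semigroup the product $\prod_{t\in H}f(t)$ factors as the product over $H\cap\{1,\ldots,r\}$ times the product over the tail, and adequacy controls the tail while the enlargement $W'$ obstructs any contribution from $H\cap\{1,\ldots,r\}$; if one insists $\min H>r$ is not automatic, the fallback is to apply the definition to each sequence and merge the resulting index sets using the directedness already exploited in Lemma~\ref{conv}. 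I expect that the argument mirrors the standard trick used in the semigroup case, where adequacy plays exactly the role that the tail condition plays for ordinary $J$-sets.
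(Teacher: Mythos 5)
There is a genuine gap: your central mechanism --- enlarging $W$ to some $W'$ so that the $J_\delta$-definition is \emph{forced} to return a witness $H$ with $\min H>r$ --- does not work, and you essentially concede this yourself when you note that ``the $J_\delta$-definition only promises \emph{some} witness $H$, not one avoiding small indices.'' Adding the low-index partial products $\prod_{t\in E}f(t)$ (for $E\subseteq\{1,\dots,r\}$) to $W$ only shrinks $\sigma(W')$ and constrains which $a$ may be chosen; it places no constraint whatsoever on which indices appear in $H$. Indeed, when $S$ is an honest semigroup every product is defined, so no ``incompatibility'' with $\sigma(W')$ can ever be arranged, yet the lemma is still nontrivial there. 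Your fallback (apply the definition to each $f$ separately and merge the index sets) also fails, since the definition of a $J_\delta$-set demands a \emph{single} pair $(a,H)$ working for all $f\in F$ simultaneously, and separate witnesses cannot be merged.

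The missing idea is much simpler and acts on the sequences rather than on $W$: for each $f\in F$ define the shifted sequence $g_f(t)=f(r+t)$. Each $g_f$ is again an adequate sequence (finite products of $g_f$ are finite products of $f$ over shifted index sets, and the tail condition is inherited), so $\{g_f : f\in F\}\in\mathcal{P}_f(\mathcal{T})$ and the bare $J_\delta$-definition applies to it with the original $W$, yielding $a\in\sigma(W)$ and $K\in\mathcal{P}_f(\mathbb{N})$ with $\prod_{t\in K}g_f(t)\in\sigma(W\ast a)$ and $a\ast\prod_{t\in K}g_f(t)\in A$ for all $f\in F$. Setting $H=r+K$ gives $\min H>r$ automatically, and $\prod_{t\in H}f(t)=\prod_{t\in K}g_f(t)$, which is exactly the conclusion. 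This reindexing trick is what the paper does, and it is what your proposal never reaches.
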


\begin{proof}
Consider $F\in\mathcal{P}_{f}({\mathcal{T}})$, $r\in\mathbb{N}$ and $W\in\mathcal{P}_{f}(S)$. 
For $f\in F$, define $g_{f}\in\mathcal{T}$ by $g_{f}(t)=f(r+t)$,  for all $t\in\mathbb{N}$.
 Since $A$ is a $J_{\delta}$-set, then there exist $a\in\sigma(W)$
and $K\in\mathcal{P}_{f}(\mathbb{N})$ such that $\prod_{t\in K}g_{f}(t)\in\sigma(W\ast a)$ and 
$a\ast\prod_{t\in K}g_{f}(t)\in A$. Using the first condition we get $\prod_{t\in K}f(r+t)\in\sigma(W\ast a)$.
By translating the variable we obtain $\prod_{t\in r+K}f(t)\in\sigma(W\ast a)$.
 Similarly the second condition yields
 $a\ast\prod_{t\in K}f(r+t)\in A$ which turns out to be $a\ast\prod_{t\in H}f(t)\in A $ where $H=r+K$.
\end{proof}

\begin{thm}
Let $(S,\ast)$ be a commutative adequate partial semigroup, let $A\subseteq S$. If there is an idempotent
$p\in\overline{A}\cap J_{\delta}(S)$, then $A$ is a $C$-set.
\end{thm}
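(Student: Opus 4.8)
The plan is to mirror the proof of Theorem~\ref{adeprod} almost verbatim, replacing the minimal idempotent of $\delta S$ by the given idempotent $p\in\overline A\cap J_\delta(S)$, and replacing every appeal to Lemma~\ref{conv} (which required minimality, i.e.\ $p\in K(\delta S)$) by an appeal to Lemma~\ref{con} (which requires only that the relevant set be a $J_\delta$-set). The single new ingredient is the observation that \emph{every} set lying in $p$ is a $J_\delta$-set, which is exactly the defining property of membership in $J_\delta(S)$.

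First I would set $A^\star=\{x\in A:x^{-1}A\in p\}$. Since $p=p\ast p\in\delta S$ and $A\in p$, it is immediate that $A^\star\in p$, and Lemma~\ref{1.15} gives that $x\in A^\star$ implies $x^{-1}A^\star\in p$. I would then define $\alpha(F)\in S$ and $H(F)\in\mathcal P_f(\mathbb N)$ for $F\in\mathcal P_f(\mathcal T)$ by induction on $|F|$, under the same two inductive hypotheses as in Theorem~\ref{adeprod}, but with $C^\star$ replaced throughout by $A^\star$.

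For the base case $F=\{f\}$, I would apply Lemma~\ref{con} to $A^\star$ (which is a $J_\delta$-set because $p\in J_\delta(S)$ and $A^\star\in p$), with any fixed $W\in\mathcal P_f(S)$, to obtain $a\in\sigma(W)$ and $L\in\mathcal P_f(\mathbb N)$ with $a\ast\prod_{t\in L}f(t)\in A^\star$, and set $\alpha(F)=a$, $H(F)=L$. For the inductive step $|F|>1$, I would copy the earlier construction: put $r=\max\bigcup\{H(G):\emptyset\neq G\subsetneq F\}$, form the finite set $M\subseteq A^\star$ of all partial products indexed by proper chains below $F$ (finite and inside $A^\star$ by hypothesis~(2)), and set $B=A^\star\cap\bigcap_{x\in M}x^{-1}A^\star$. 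Then $B\in p$, so $B$ is again a $J_\delta$-set; applying Lemma~\ref{con} with this $B$, with $W=M$, and with the given $r$ yields $a\in\sigma(M)$ and $L$ with $\min L>r$ such that $\prod_{t\in L}f(t)\in\sigma(M\ast a)$ and $a\ast\prod_{t\in L}f(t)\in B$ for every $f\in F$; I then set $\alpha(F)=a$ and $H(F)=L$. Verifying the inductive hypotheses is then identical to Theorem~\ref{adeprod}: hypothesis~(1) holds because $\min L>r$, and for hypothesis~(2) with $n>1$ one writes $y=\prod_{i=1}^{n-1}(\alpha(G_i)\ast\prod_{t\in H(G_i)}f_i(t))\in M$, notes $a\in\sigma(M)\subseteq\phi(y)$ and $a\ast\prod_{t\in L}f_n(t)\in B\subseteq y^{-1}A^\star$, and concludes that the full product lies in $A^\star$.

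Since $A^\star\subseteq A$, the resulting $\alpha$ and $H$ witness the two conditions of Definition~\ref{cset}, so $A$ is a $C$-set. The one place that genuinely differs from the central case is the justification that Lemma~\ref{con} may be invoked at every stage, and this is the step I expect to be the conceptual crux: it is precisely the hypothesis $p\in J_\delta(S)$ that guarantees that the sets $A^\star$ and $B$ arising along the induction are $J_\delta$-sets, so that the combinatorial extraction of Lemma~\ref{con} cleanly replaces the ideal-theoretic extraction of Lemma~\ref{conv}. Everything else is a routine transcription of the earlier argument.
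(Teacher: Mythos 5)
Your proposal is correct and follows essentially the same route as the paper's own proof: both replace the minimal idempotent and Lemma~\ref{conv} from the central-set argument with the hypothesis $p\in J_{\delta}(S)$ (so that $A^{\star}$ and $B=A^{\star}\cap\bigcap_{x\in M}x^{-1}A^{\star}$ are $J_{\delta}$-sets) and then invoke Lemma~\ref{con} to carry out the identical induction on $|F|$. The only cosmetic difference is that the paper uses the $J_{\delta}$-set definition directly in the base case while you call Lemma~\ref{con} there as well, which is harmless.
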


\begin{proof}
Let $p$ be an idempotent in $\delta S$ and let $p\in\overline{A}\cap J_{\delta}(S)$. Let 
$A^{\star}=\{a\in A:a^{-1}A\in p\}$, then $A^{\star}\in p$ and by Lemma \ref{1.15}, for every
$a\in A^{\star}$, $a^{-1}A^{\star}\in p$.

We define $\alpha(F) \in S$ and $H(F) \in \mathcal{P}_f(\mathbb{N})$ for $F \in \mathcal{P}_f(\mathcal{T})$ 
by using induction on $|F|$ satisfying the following inductive hypotheses:
\begin{itemize} 
\item[(1)] if $\emptyset\neq G\subsetneq F$, then $\max H(G)<\min H(F)$
and
\item[(2)] if $m\in\mathbb{N}$, $G_{1},G_{2},\ldots,G_{m}\in\mathcal{P}_{f}(\mathcal{T})$,
$G_{1}\subsetneq G_{2}\subsetneq\ldots\subsetneq G_{m}=F$
and for each $i\in\{1,2,\ldots,m\}$, $f_{i}\in G_{i}$, then\hfill\break
$ \prod_{i=1}^{m}(\alpha(G_{i})\ast\prod_{t\in H(G_{i})}f_i(t))\in A^{\star}$.
\end{itemize}

First assume that $F=\{f\}$. Since $A^\star\in p$, $A^\star$ is a $J_{\delta}$-set. 
Pick $W\in \mathcal{P}_{f}(S)$. ($W$ does not enter in to the argument at this stage.)
Since $A^{\star}$ is a $J_{\delta}$-set, pick some
$a\in\sigma(W)$ and $L\in\mathcal{P}_{f}(\mathbb{N})$
such that $\prod_{t\in L}f(t)\in\sigma(W\ast a)$ and $a\ast \prod_{t \in L}f(t)\in A^{\star}$.
Now let $\alpha(F)=a$ and $H(F)=L$. Then hypothesis (1) holds vacuously. To verify hypothesis (2), 
let $n \in \mathbb{N}$, and assume that $\emptyset \neq G_1 \subsetneq  G_2 \subsetneq \ldots \subsetneq G_n=F$, and 
$\langle f_i \rangle_{i=1}^n \in \bigtimes_{i=1}^n G_i$. Then $n=1$ and $f_n=f$ so 
$\prod_{i=1}^{n}(\alpha(G_{i})\ast\prod_{t\in H(G_{i})}f_i(t))= a \ast \prod_{t \in L} f(t) \in A^\star$.

 Now assume that $|F|>1$
and $\alpha(K)$ and $H(K)$ have been chosen for all $K$ with $\emptyset\neq K\subsetneq F$.
Let $r=\max\bigcup\{H(K):\emptyset\neq K\subsetneq F\}$. Let

\[
\begin{array}{rl}
M = & \{\prod_{i=1}^{n}(\alpha(G_{i})\ast\prod_{t\in H(G_{i})}f_{i}(t)) \mid  
n\in\mathbb{N}, G_{1}, G_{2}, \ldots, G_{n} \in\mathcal{P}_{f}(\mathcal{T}),\\
 &\emptyset\neq G_{1}\subsetneq G_{2}\subsetneq\ldots\subsetneq G_{n}\subsetneq F\mbox{ and }
\langle f_i\rangle_{i=1}^n\in\bigtimes_{i=1}^n G_{i}\}.
\end{array}
\]

 Then by hypothesis (2), $M$ is finite and $M\subseteq A^{\star}$.  Let $B=A^{\star}\cap\big(\bigcap_{x\in M}x^{-1}A^{\star}\big)$.
Then $B\in p$ and thus $B$ is a $J_{\delta}$-set.  Pick by Lemma \ref{con}, $a \in \sigma(M)$
and $L \in\mathcal{P}_{f}(\mathbb{N})$ such that $\min L>r$,
and for all $f\in F$, $\prod_{t \in L}f(t) \in \sigma (M \ast a)$ and $a \ast \prod_{t\in L}f(t)\in B$. 
Let $\alpha (F)= a$ and let $H(F)= L$.

Since $\min L > r$,  hypothesis (1) holds. To verify hypothesis (2), let $n \in \mathbb{N}$, let 
$\emptyset \neq G_1 \subsetneq G_2 \subsetneq \ldots \subsetneq G_n=F$, and let $\langle f_i \rangle_{i=1}^n 
\in \bigtimes_{i=1}^n G_i$. If $n=1$, then  $\prod_{i=1}^{n}(\alpha(G_{i})\ast\prod_{t\in H(G_{i})}f_{i}(t))= 
a \ast \prod_{t \in L}f(t) \in B \subseteq  A^\star$.  
Now assume that $n > 1$ and let $y= \prod_{i=1}^{n-1}(\alpha(G_{i})\ast\prod_{t\in H(G_{i})}f_{i}(t))$. 
Then $y \in M$. Therefore,  $a \in \sigma (M) \subseteq \phi(y)$ and  $\prod_{t \in L} f_n(t) \in \sigma(M \ast a) $ 
and $a \ast  \prod_{t \in L} f_n(t) \in B \subseteq y^{-1}A^\star$. Therefore, 
$\prod_{i=1}^n(\alpha(G_i)\prod_{t \in H(G_i)} f_i(t))= y \ast a \ast \prod_{t \in L} f_n(t) \in A^\star$. 
\end{proof}

\vspace{.5cm}

\noindent {\bf Acknowledgement:} The author would like to thank the referee for careful reading of the draft and enormous valuable suggestions to the improvements of the article. She also thanks her advisor Prof. Dibyendu De for his guidance and suggestions. Finally, the author acknowledges support received from the UGC-NET research grant.

\end{document}